\theoremstyle{plain}
\newtheorem{thm}{Theorem}[section] 
\newtheorem{prop}[thm]{Proposition}
\newtheorem{lem}[thm]{Lemma}
\newtheorem{corl}[thm]{Corollary}
\theoremstyle{definition}
\newtheorem{nota}[thm]{Notation}
\begin{document} 
\title[Steenrod operations on the modular invariants]{Steenrod operations on the modular\\ invariants}

\author{Nguy\~\ecircumflex n Sum}

\subjclass[2010]{Primary 55S10; Secondary 55S05}
\keywords{Polynomial algebra, cohomology operations, modular invariants}
	
\maketitle

\begin{abstract} 
In this paper, we compute the action of the mod $p$ Steenrod operations on the modular invariants of the linear groups with $p$ an odd prime number. 
\end{abstract}

\section*{Introduction}

Fix an odd prime $p$. Let $A_{p^n}$ be the alternating group on $p^n$ letters. Denote by $\Sigma_{p^n,p}$ a Sylow $p$-subgroup of $A_{p^n}$ and $E^n$ an elementary abelian $p$-group of rank $n$. Then we have the restriction homomorphisms
\begin{align*}&\mbox{Res}(E^n,\Sigma_{p^n,p}): H^*(B\Sigma_{p^n,p}) \longrightarrow H^*(BE^n),\\
&\mbox{Res}(E^n, A_{p^n}): H^*(BA_{p^n}) \longrightarrow H^*(BE^n),
\end{align*}
induced by the regular permutation representation $E^n \subset \Sigma_{p^n,p} \subset A_{p^n}$ of $E^n$ (see M\`ui
\cite{mui2}). Here and throughout the paper, we assume that the coefficients are taken in the prime field $\mathbb Z/p$. Using modular invariant theory of linear groups, M\`ui proved in \cite{mui1,mui2}
that
\begin{align*}&\mbox{ImRes}(E^n,\Sigma_{p^n,p}) = E(U_1,\ldots,U_n) \otimes P(V_1,\ldots,V_n),\\
	&\mbox{ImRes}(E^n, A_{p^n})= E(\tilde M_{n,0},\ldots , \tilde M_{n,n-1})\otimes P(\tilde L_n,Q_{n,1},\ldots, Q_{n,n-1}),
\end{align*}

Here and in what follows, $E(., \ldots, . )$ and $P(., \ldots , . )$ are the exterior and polynomial algebras over $\mathbb Z/p$ generated by the variables indicated. $\tilde L_n,\, Q_{,s}$ are the Dickson invariants of dimensions $p^n,\, 2(p^n - p^s)$, and $\tilde M_{n,s},, U_k,\, V_k$ are the M\`ui invariants of dimensions
$p^n-2p^s,\, p^{k-1},\, 2p^{k-1}$  respectively (see Section \ref{s2}).

Let $\mathcal A$ be the mod $p$ Steenrod algebra and let $\tau_s,\, \xi_i$ be the Milnor elements of dimensions $2p^s - 1,\, 2p^i - 2$ respectively in the dual algebra $\mathcal A_*$ of $\mathcal A$. In \cite{mil}, Milnor showed that, as an algebra
$$\mathcal A_* = E(\tau_0,\tau_1,\ldots)\otimes P(\xi_1,\xi_2,\ldots).$$
Then $\mathcal A_*$ has a basis consisting of all monomials $\tau_S\xi^R = \tau_{s_0}\ldots\tau_{s_k}\xi^{r_1}\ldots\xi^{r_m}$, with $S = (s_1,\ldots,s_k),\, 0 \leqslant s_1 < \ldots < s_k$, $R = (r_1, \ldots, r_m),\, r_i \geqslant 0$. Let $St^{S,R}\in \mathcal A$ denote the dual of $\tau_S\xi^R$  with respect to that basis. Then $\mathcal A$ has a basis consisting all operations $St^{S,R}$.
For $S = \emptyset,\, R = (r),\, St^{\emptyset,(r)}$ is nothing but the Steenrod operation $P^r$.

Since $H^*(BG),\ G = E^n,\, \Sigma_{p^n,p}$ or $A_{p^n}$, is an $\mathcal A$-module (see \cite[Chap. VI]{ste}) and the restriction homomorphisms are $\mathcal A$-linear, their images are $\mathcal A$-submodules of $H^*(BE^n)$.

The purpose of the paper is to study the module structures of $\mbox{ImRes}(E^n,\Sigma_{p^n,p})$ and $\mbox{ImRes}(E^n, A_{p^n})$ over the Steenrod algebra $\mathcal A$. More precisely, we prove a duality relation
between $St^{S,R}(\tilde M_{n,s}^\delta Q_{n,s}^{1-\delta})$ and $St^{S',R'}(U_{k+1}^\delta V_{k+1}^{1-\delta})$ for $\delta = 0, 1,\, \ell(R) = k$ and $\ell(R') = n$.
Here by the length of a sequence $T = (t_1,\ldots ,t_q)$ we mean the number $\ell(T) = q$. Using this relation we explicitly compute the action of the Steenrod operations $P^r$ on $U_{k+1},\, V_{k+1},\, M_{n,s}$ and $Q_{n,s}$.

The analogous results for $p = 2$ have been announced in \cite{sum}.

The action of $P^r$ on $V_{k+1}$ and $Q_{n,s}$ has partially studied by Campbell \cite{cam}, Madsen \cite{mad}, Madsen-Milgram \cite{mmi}, Smith-Switzer \cite{ssw}, Wilkerson \cite{wil}. Eventually, this action was completly determined by Hung-Minh \cite{hum} and by Hai-Hung \cite{hhu}, Hung \cite{hun} for the case of the coefficient ring $\mathbb Z/2$.

The paper contains 3 sections. After recalling some needed information on the invariant theory, the Steenrod homomorphism $d_n^*P_n$ and the operations $St^{S,R}$ in Section \ref{s2}, we prove the duality theorem and its corollaries in Section \ref{s3}. Finally, Section \ref{s4} is an application of the duality theorem to determine the action of the Steenrod operations on the Dickson and M\`ui invariants.

\section*{Acknowledgement} 
The author expresses his warmest thanks to Professor Hu\`ynh M\`ui for generous help and inspiring guidance. He also thanks Professor Nguy\~\ecircumflex n H.V. H\uhorn ng for helpful suggestions which lead him to this paper.

\section{Preliminaries}\label{s2}

As is well-known $H^*(BE^n) = E(x_1,\ldots,x_n) \otimes P(y_1,\ldots,y_n)$ where $\dim x_i = 1,\,
y_i = \beta x_i$ with $\beta$ the Bockstein homomorphism. Following Dickson \cite{dic} and M\`ui \cite{mui1}, we
define
\begin{align*}
	&[e_1, \ldots,e_k] = \det (y_i^{p^{e_j}}),\\
	&[1;e_{2},  \ldots,  e_k]  =  
	\begin{vmatrix} x_1&\cdots &x_k\\
	y_1^{p^{e_{2}}}&\cdots &y_k^{p^{e_{2}}}\\
	\vdots&\cdots  &\vdots\\
	y_1^{p^{e_k}} & \cdots & y_k^{p^{e_k}}
	\end{vmatrix} . 
\end{align*}
for every sequence of non-negative integers $(e_1 , \ldots , e_k),\, 1 \leqslant k \leqslant n$. We set
\begin{align*}
	&L_{k,s} = [0, \ldots, \hat s,\ldots,k],\, L_k = L_{k,k} = [0,\ldots,k-1],\, L_0 = 1,\\
	&M_{k,s} = [1;0, \ldots , \hat s,\ldots, k-1],\, 0\leqslant s < k \leqslant n.
\end{align*}
Then $\tilde L_n,\, Q_{n,s},\, \tilde M_{n,s},\, U_k,\, V_k$ are defined by
\begin{align*}
	&\tilde L_{n} = L_n^h,\, h = (p-1)/2,\, Q_{n,s} = L_{n,s}/L_n,\, 0\leqslant s \leqslant n,\\
	&\tilde M_{k,s} = M_{n,s}L_n^{h-1},\, U_k = M_{k,k-1}L_{k-1}^{h-1},\, V_k = L_k/L_{k-1}, \, 1 \leqslant k \leqslant n.
\end{align*}
Note that $Q_{n,0} = \tilde L_n^2$, $Q_{n,n} = 1$ for any $n > 0$.

Let X be a topological space. Then we have the Steenrod power map
$$P_n: H^q(X)\, \longrightarrow  H^{p^nq}
(EA_{p^n}\underset{A_{p^n}}\times X^{p^n}), $$
which sends $u$ to $1 \otimes u^{p^n}$ at the cochain level (see \cite[Chap. VII]{ste}). We also have the diagonal homomorphism
$$d_n^* : H^*(EA_{p^n}\underset{A_{p^n}}\times  X^{p^n}) \longrightarrow \,   H^*(BE^n )\otimes H^*(X) $$
induced by the diagonal map of $X$, the inclusion $E^n \subset A_{p^n}$ and the K\"unneth formula.

$d_n^*P_n$ has the following fundamental properties.

\begin{prop}[M\`ui \cite{mui1,mui2}]\label{md11}\

\medskip
{\rm (i)} $d_n^*P_n$ is natural monomorphism preserving
cup product up to a sign, more precisely
$$d^*_nP_n(uv)   = (-1)^{nhqr}d^*_nP_nud^*_nP_nv\ ,  $$
where $q = \dim u,\, r = \dim v,\, h = (p-1)/2.$

\medskip 
	{\rm (ii)} $d^*_nP_n = d^*_{n-s}P_{n-s}d^*_sP_s\ ,\ 0 \leqslant s \leqslant n. $
	
\medskip
	{\rm (iii)} For $H^*(E^1) = E(x)\otimes P(y)$, we have
\begin{align*}
&d_n^*P_nx = (-h!)^nU_{n+1} = (h!)^n\Big(\tilde L_nx + \sum_{s=0}^{n-1}(-1)^{s+1}\tilde M_{n,s}y^{p^s}\Big),\\
&d_n^*P_ny = V_{n+1} = (-1)^n\sum_{s=0}^n(-1)^sQ_{n,s}y^{p^s}.
\end{align*}
where $U_{n+1} = U_{n+1}(x_1, \ldots,x_n,x,y_1,\ldots,y_n,y)$, $V_{n+1} = V_{n+1}(y_1,\ldots,y_n,y)$.
\end{prop}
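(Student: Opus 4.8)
The plan is to establish the three assertions, which are M\`ui's, along the following lines. For \emph{part (i)}, naturality of $d_n^*P_n$ is immediate: the power map $P_n$ into the extended power $EA_{p^n}\times_{A_{p^n}}X^{p^n}$ is natural in $X$ at the cochain level, and $d_n^*$ is induced by the natural diagonal of $X$ together with the fixed inclusion $E^n\subset A_{p^n}$ and the K\"unneth isomorphism. For the product formula I would argue externally: the canonical shuffle $X^{p^n}\times Y^{p^n}\cong(X\times Y)^{p^n}$ is $A_{p^n}$-equivariant, and on cochains it carries $u^{p^n}\otimes v^{p^n}$ to $(u\otimes v)^{p^n}$ up to the Koszul sign incurred by moving each of the $p^n$ copies of $v$ past each of the $p^n$ copies of $u$; that sign is $(-1)^{qr\binom{p^n}{2}}$, and since $\binom{p^n}{2}=p^n(p^n-1)/2\equiv nh\pmod 2$ for $p$ odd, it equals $(-1)^{nhqr}$. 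Applying $d_n^*$ and K\"unneth produces the displayed identity. For the monomorphism claim I would pass to the component of $d_n^*P_n(u)$ of lowest $H^*(X)$-degree: for $u\in H^q(X)$ this component equals $w_n\otimes u$ with $w_n\in H^{(p^n-1)q}(BE^n)$ a definite nonzero class (the Euler-class term of the power, with a unit coefficient in $\mathbb Z/p$), so $d_n^*P_n(u)=0$ forces $u=0$.

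For \emph{part (ii)}, this is associativity of the extended-power construction. Writing the $p^n$ letters as $\{1,\dots,p^{n-s}\}\times\{1,\dots,p^s\}$ exhibits $\Sigma_{p^{n-s}}\wr\Sigma_{p^s}$, hence $A_{p^{n-s}}\wr A_{p^s}$ (the parity point being harmless for mod $p$ cohomology at odd $p$), as a subgroup of $A_{p^n}$ through which the regular representation of $E^n=E^{n-s}\times E^s$ factors. Under this identification $EA_{p^n}\times_{A_{p^n}}X^{p^n}$ is homotopy equivalent to the two-stage extended power, $P_n$ corresponds to $P_{n-s}$ applied to the value of $P_s$, and the two diagonals and the K\"unneth isomorphisms compose compatibly, which yields $d_n^*P_n=d_{n-s}^*P_{n-s}\,d_s^*P_s$.

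For \emph{part (iii)} I would settle $n=1$ first and then bootstrap with (ii) and (i). Restricting $P_1$ to $E^1=\mathbb Z/p$ and applying $d_1^*$ is exactly Steenrod's original construction of $P^i$ and $\beta$: for any $u\in H^*(X)$ one gets a finite expansion of $d_1^*P_1(u)$ as a sum of monomials in $H^*(BE^1)=E(x_1)\otimes P(y_1)$ tensored with $P^iu$ and $\beta P^iu$, the universal coefficients being read off from $H^*(B\mathbb Z/p)$ and the standard cochain formula for $P_1$ (the source of the $h!$ factors, $h!$ appearing for odd-degree classes and reducing to a sign for even-degree ones via $(h!)^2\equiv(-1)^{h-1}\pmod p$). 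For $u=x$ only the terms $P^0x=x$ and $\beta x=y$ survive, as $P^ix=0$ for $i\geqslant1$; matching coefficients identifies them with $\tilde L_1=y_1^{h}$ and $\tilde M_{1,0}=x_1y_1^{h-1}$, which is the asserted formula, equal to $(-h!)U_2$ since $U_2=(x_1y-xy_1)y_1^{h-1}$. For $u=y$ only $P^0y=y$ and $P^1y=y^p$ survive ($\beta y=0$), and matching coefficients identifies the result with $-Q_{1,0}y+y^p=V_2$, where $Q_{1,0}=\tilde L_1^2=y_1^{p-1}$ and $Q_{1,1}=1$; alternatively one may apply the Bockstein to the formula for $d_1^*P_1x$. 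For general $n$ I would induct using (ii) in the form $d_n^*P_n=d_1^*P_1\,d_{n-1}^*P_{n-1}$: apply $d_1^*P_1$ to the inductive expression for $d_{n-1}^*P_{n-1}x$, distribute it over products by (i), substitute the $n=1$ formulas for each $d_1^*P_1x_i$ and $d_1^*P_1y_j$, and recognize the outcome; likewise for $y$.

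The main obstacle is precisely this final recognition step, together with keeping the $n=1$ coefficients and signs straight. To carry it out I would not manipulate the determinants $[e_1,\dots,e_k]$ and $[1;e_2,\dots,e_k]$ directly; instead I would use the Euler-class factorizations $V_{n+1}=\prod_{(\lambda_1,\dots,\lambda_n)\in\mathbb F_p^n}\bigl(y-\lambda_1y_1-\cdots-\lambda_ny_n\bigr)$ and the analogous linear factorization of $U_{n+1}$, which turn these M\`ui/Dickson invariants into products over $\mathbb F_p^n$; then applying $d_1^*P_1$ factor by factor and re-collecting becomes a manipulation of such products rather than of determinants, and the factor $(h!)^n$ and the sign $(-1)^n$ propagate from the $n=1$ case by the induction.
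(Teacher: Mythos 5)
The paper does not prove this proposition at all: it is quoted from M\`ui \cite{mui1,mui2} as background material, so there is no internal argument to measure yours against. Judged on its own terms, your outline is the standard route (extended powers as in Steenrod--Epstein, Ch.~VII, combined with M\`ui's identification of the invariants), and the checkable points are right: the Koszul sign computation $\binom{p^n}{2}=p^n(p^n-1)/2\equiv nh\pmod 2$ is correct since $(p^n-1)/2=h(1+p+\cdots+p^{n-1})\equiv hn$; injectivity does follow from the unique summand of $d_n^*P_nz$ whose $H^*(X)$-factor has degree $q$, namely $\mu(q)^n\,\tilde L_n^q\otimes z$ (visible in Theorem \ref{dl12}); the wreath-product factorization is the standard proof of (ii); and the $n=1$ case of (iii) reduces, as you say, to $P^0x=x$, $\beta x=y$, $P^ix=0$ for $i\geqslant1$, $P^0y=y$, $P^1y=y^p$, $\beta y=0$, together with $\tilde L_1=y_1^{h}$, $\tilde M_{1,0}=x_1y_1^{h-1}$, $Q_{1,0}=y_1^{p-1}$, $Q_{1,1}=1$.

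Two places remain genuinely unfinished. First, the normalizations: the factors $(h!)^n$, $(-h!)^n$ and the alternating signs $(-1)^{s+1}$, $(-1)^{n+s}$ are precisely the content of the constants (the $\mu(q)$ of Theorem \ref{dl12} and the signs in Steenrod's definition of $P^i$ and $\beta P^i$ from $d_1^*P_1$); you assert that they come out right but never track them, and this bookkeeping is where most of the labour in M\`ui's proof actually lies. Second, and more substantively, the inductive recognition step. The factorization $V_{k}=\prod_{\lambda\in\mathbb F_p^{k-1}}(y_k+\lambda_1y_1+\cdots+\lambda_{k-1}y_{k-1})$ does make the $V$-half of the induction workable, since $d_1^*P_1$ can be distributed over the product by (i). But there is no ``analogous linear factorization of $U_{n+1}$'': one has $U_{n+1}=M_{n+1,n}L_n^{h-1}$, and $M_{n+1,n}=[1;0,\ldots,n-1]$ is a determinant with an exterior row, hence a \emph{sum} $\sum_i\pm x_i\cdot(\text{Dickson-type cofactor})\pm x\cdot L_n$ rather than a product of linear forms. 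So the tool you propose for the recognition step covers $d_n^*P_ny$ but not $d_n^*P_nx$; closing the induction for $x$ requires either M\`ui's determinantal (Laplace-expansion) identities among the $M_{k,s}$, $L_{k,s}$, $L_k$, or an invariant-theoretic argument pinning down $d_n^*P_nx$ as the unique $SL_n$-invariant of its type. As it stands the proposal is a correct plan, consistent with the cited source, but not yet a proof at that point.
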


The following is a description of $d_n^*P_n$ in terms of modular invariants and cohomology operations.

\begin{thm}[{M\`ui \cite[1.3]{mui2}}]\label{dl12} Let $ z \in H^q(X)$,\, $\mu(q)=(h!)^q(-1)^{hq(q-1)/2}$. We then have 
	$$d_n^*P_nz\,  =\, \mu (q)^n \sum_{S,R}
	(-1)^{r(S,R)}\tilde  M_{n,s_1} \ldots\tilde  M_{n,s_k}\tilde L_n^{r_0}Q_{n,
		1}^{r_1}\ldots Q_{n,n-1}^{r_{n-1}}\ \otimes\ St^{S,R}z\ .$$
Here the sum rwns over all $(S,R)$ with $S = (s_1, \ldots , s_k)$, $0\leqslant s_1 < \ldots < s_k$, $ R = (r_1,\ldots, r_n),\, r_i \geqslant 0$,  $r_0 = q - k - 2(r_1 +\ldots\ +r_n)\geqslant 0,\ r(S,R) = k + s_1 + \ldots\ + s_k + r_1 + 2r_2 +\ \ldots\ + nr_n\ .$
\end{thm}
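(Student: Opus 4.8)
The plan is to prove the formula by induction on $n$, using only the structural properties of $d_n^*P_n$ recorded in Proposition \ref{md11}: the Cartan-type multiplicativity (i), the composition law (ii), and the explicit values (iii) on $H^*(E^1)$. Note first that the identity $\mu(q'+q'')=(-1)^{hq'q''}\mu(q')\mu(q'')$, immediate from the definition of $\mu$, exactly matches the sign in the multiplicativity (i); this compatibility is what allows the scalar $\mu(q)^n$ to survive the induction.

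\textbf{Base case $n=1$.} Here the statement is a reformulation of Steenrod's classical description of the external $p$-th power. Writing $H^*(BE^1)=E(x)\otimes P(y)$, the definitions give $L_0=1$, $L_1=y$, $M_{1,0}=x$, hence $\tilde L_1=y^{h}$ and $\tilde M_{1,0}=xy^{h-1}$; moreover in the Milnor basis $St^{\emptyset,(r)}=P^r$ and $St^{(0),(r)}=\pm\beta P^r$. Steenrod's formula for the restriction of $P_1z$ to $B\mathbb Z/p$ (see \cite[Chap. VII]{ste}) is a linear combination of the classes $y^{h(q-2i)}\otimes P^iz$ and $xy^{h(q-2i)-1}\otimes\beta P^iz$ with coefficients $(-1)^i\mu(q)$ and $-(-1)^i\mu(q)$; substituting the identifications above converts this verbatim into the case $n=1$ of the theorem, the relative sign being absorbed into $(-1)^{r(S,R)}$ since $r((0),(i))=1+i$ while $r(\emptyset,(i))=i$. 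As a consistency check one evaluates both sides on $z=x$ and $z=y$ against Proposition \ref{md11}(iii): for $z=x$ only $(S,R)$ with $R=\emptyset$, $S\in\{\emptyset,(0)\}$ survive the constraint $r_0=q-k-2\sum r_i\ge0$, and $Q_sx=y^{p^s}$ reproduces $U_2$; for $z=y$ the identity $(h!)^2(-1)^h=(p-1)!\equiv-1\pmod p$ (Wilson's theorem) forces $\mu(2)^n\equiv(-1)^n$ and reproduces $V_2$.

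\textbf{Inductive step.} Assume the formula for $n-1$. By Proposition \ref{md11}(ii) we may write $d_n^*P_n=d_1^*P_1\circ d_{n-1}^*P_{n-1}$. Apply the inductive hypothesis to $z\in H^q(X)$, obtaining $d_{n-1}^*P_{n-1}z$ as a combination of products $w'\cdot(St^{S',R'}z)$ with $w'$ a monomial in $\tilde M_{n-1,\bullet},\tilde L_{n-1},Q_{n-1,\bullet}$. Now apply $d_1^*P_1$: by multiplicativity (i) it distributes, up to the sign $(-1)^{h\dim w'\cdot\dim St^{S',R'}z}$, over each such product; the factor $d_1^*P_1(w')$ is computed from Proposition \ref{md11}(iii) (which gives $d_1^*P_1x_i$ and $d_1^*P_1y_i$, hence by (i) the value on any monomial) and the factor $d_1^*P_1(St^{S',R'}z)$ from the base case. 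The resulting double sum must be reorganised into the single sum of the statement. This rests on the recursion relations expressing the $n$-variable Dickson and M\`ui invariants $\tilde L_n,Q_{n,s},\tilde M_{n,s}$ in terms of the $(n-1)$-variable ones together with $U_n=U_n(x_1,\dots,x_{n-1},x,y_1,\dots,y_{n-1},y)$ and $V_n=V_n(y_1,\dots,y_{n-1},y)$ appearing in Proposition \ref{md11}(iii) — one may in fact derive exactly these recursions by comparing the two sides of $d_n^*P_n=d_1^*P_1\circ d_{n-1}^*P_{n-1}$ on $z=x$ and $z=y$ via (iii). These relations convert products of $(n-1)$-variable invariants with powers of $x,y$ into the stated $n$-variable monomials, and simultaneously package the index data into a single $(S,R)$ with $0\le s_1<\dots<s_k\le n-1$, $\ell(R)\le n$, $r_0\ge0$. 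A final bookkeeping that the three sources of sign — the $(-1)^{r(S',R')}$ from the hypothesis, the $\pm\beta P^r$ identification, and the Cartan signs, normalised against $\mu(q'+q'')=(-1)^{hq'q''}\mu(q')\mu(q'')$ — combine to $(-1)^{r(S,R)}$ completes the induction.

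\textbf{Main obstacle.} The homotopy-theoretic input is entirely contained in Proposition \ref{md11}; the real work is the combinatorial and sign bookkeeping of the inductive step, in particular showing that the coefficients come out in the \emph{multiplicative} shape $\tilde M_{n,s_1}\cdots\tilde M_{n,s_k}\tilde L_n^{r_0}Q_{n,1}^{r_1}\cdots Q_{n,n-1}^{r_{n-1}}$ rather than as some opaque polynomial — this is exactly where the recursive structure of the modular invariants is indispensable. An alternative is to first note, from the construction of $St^{S,R}$ out of iterated power operations, that by naturality $d_n^*P_nz=\sum_{S,R}\theta^{(n)}_{S,R}(q)\otimes St^{S,R}z$ for universal classes $\theta^{(n)}_{S,R}(q)\in H^*(BE^n)$ depending only on $n$ and $q=\dim z$, and then to pin these down by evaluating on the fundamental classes $\iota_q\in H^q(K(\mathbb Z/p,q))$, where the relevant $St^{S,R}\iota_q$ are linearly independent; but identifying the resulting coefficients still requires the same invariant-theoretic identities, so this route does not really circumvent the obstacle.
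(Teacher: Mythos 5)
First, note that the paper offers no proof of this statement: Theorem \ref{dl12} is quoted verbatim from M\`ui \cite[1.3]{mui2}, so there is no in-paper argument to compare against. Your strategy — induction on $n$ via $d_n^*P_n=d_1^*P_1\circ d_{n-1}^*P_{n-1}$, anchored on Steenrod's classical $n=1$ formula and the multiplicativity of $d_n^*P_n$ — is indeed the standard route (essentially M\`ui's), and your preliminary checks are correct: the identity $\mu(q'+q'')=(-1)^{hq'q''}\mu(q')\mu(q'')$, the identifications $\tilde L_1=y^h$, $\tilde M_{1,0}=xy^{h-1}$, $St^{\emptyset,(r)}=P^r$, $St^{(0),(r)}=\beta P^r$, and the consistency with Proposition \ref{md11}(iii) via Wilson's theorem all hold.

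There is, however, a genuine gap at the heart of the inductive step. After applying $d_1^*P_1$ to $\sum w'\otimes St^{S',R'}z$, the operations that appear on the $H^*(X)$ factor are the composites $P^i\circ St^{S',R'}$ and $\beta P^i\circ St^{S',R'}$, which are \emph{not} single Milnor basis elements. To recover the asserted form $\sum_{S,R}(\text{monomial})\otimes St^{S,R}z$ you must expand these composites in the basis $\{St^{S,R}\}$, and that expansion is governed by Milnor's coproduct formulas $\psi(\xi_k)=\sum_i\xi_{k-i}^{p^i}\otimes\xi_i$ and $\psi(\tau_k)=\tau_k\otimes1+\sum_i\xi_{k-i}^{p^i}\otimes\tau_i$ — an ingredient your argument never invokes. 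The content of the theorem is precisely that the Frobenius-twisted terms $\xi_{k-i}^{p^i}$ in this coproduct match, term by term and with multiplicities, the $p$-th-power recursions expressing $\tilde L_n,Q_{n,s},\tilde M_{n,s}$ through the $(n-1)$-variable invariants and $U_n,V_n$; a given $(S,R)$ generally receives contributions from several pairs $\bigl((i,\varepsilon),(S',R')\bigr)$, and one must show these sum to the single stated monomial with coefficient $(-1)^{r(S,R)}\mu(q)^n$. Your proposal asserts this reorganisation ("these relations convert products \dots into the stated $n$-variable monomials") rather than proving it, and you yourself identify it as "the real work"; the alternative route via universal classes $\theta^{(n)}_{S,R}(q)$, as you concede, lands on the same unproved matching. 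Until that matching is carried out — e.g., by writing down the dual composition law $\langle ab,\tau_S\xi^R\rangle=\langle a\otimes b,\psi(\tau_S\xi^R)\rangle$ and the explicit recursions for the invariants, and checking the bijection of index data — the induction does not close.
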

\section{The duality theorem}\label{s3}

Let $\tilde m_{m,s},\, \tilde \ell_m,\, q_{m,s},\, m = n$ or $k$,
(resp. $u_{k+1},\,  v_{k+1}$) be the dual of
$\tilde M_{m,s},\, \tilde L_m,\, Q_{m,s}$ (resp. $U_{k+1},\, V_{k+1}$) in
$$ E(\tilde M_{m,0},\ldots,\tilde M_{m,m-1}) \otimes
P(\tilde L_m,Q_{m,1}\ldots, Q_{m,m-1})$$
(resp. $E(U_{k+1})\otimes P(V_{k+1})$) with respect to the basis consisting of all monomials 
\begin{align*}\tilde M_S\tilde Q^H &=  \tilde M_{m,s_1} \ldots \tilde M_{m,s_k} \tilde L_m^{h_0}Q_{m,1}^{h_1}\ldots Q_{m,m-1}^{h_{m-1}},
\end{align*}
with $S = (s_1,\ldots, s_k),\,   0 \leqslant s_1  < \ldots < s_k, \, H = (h_0,\ldots,h_{m-1}),\, h_i \geqslant 0 ,$ (resp. $ U_{k+1}^eV_{k+1}^j;\,
e = 0,  1,\ j \geqslant 0$). Let $\Gamma(\tilde\ell_m,q_{m,1},\ldots,
q_{m,m-1})$ (resp.  $\Gamma(v_{k+1})$)  be the divided polynomial algebra with divided power $\gamma_i,\, i \geqslant 0$ generated by $\tilde\ell_m,\  q_{m,   1},  \ldots$, $
q_{m, m-1}$\  (resp.  $v_{k+1}$).  We set
$$\tilde m_S\tilde q_H =  \tilde m_{m,s_1} \ldots \tilde m_{m, s_k}\gamma_{h_0}(\tilde   \ell_m)\gamma_{h_1}(q_{m,1})\ldots
\gamma_{h_{m-1}}(q_{m,   m-1}).
$$
For $q \geqslant 0$ and $R = (r_1,\ldots,r_m)$, set
$$ R_q^* = (q - 2(r_1 + \ldots + r_m),r_1,\ldots, r_{m-1}).
$$

Let $V$ be a vector space over $\mathbb Z/p$ and $V^*$ be its dual. Denote by
$$\langle.,.\rangle : V \otimes V^* \longrightarrow \mathbb Z/p$$
the dual pairing.

The main result of the section is

\begin{thm}\label{dl21} Suppose given $e, \delta = 0,  1,\, j \geqslant 0,\, (S,R)$, and $(S',R')$ with $\ell(R)  = k, \,  \ell(R') =  n,\,   \ell(S) = t  \leqslant k,\, \ell(S') = t' \leqslant n$. Set
$ \sigma =  r(S,R)  +  r(S',R')  +  s  +  \delta  +  (t  + [-2p^s])t' + nhk\delta ,$ with $-\delta \leqslant s \leqslant n - \delta$. Then we have
\begin{multline*}\langle  \tilde  m_S\tilde  q_{R^*_{(2-\delta)p^n  -  e - 2j -	t}}\otimes  u_{k+1}^e\gamma_j(v_{k+1}),   St^{S',R'}\big(U_{k+1}^\delta V_{k+1}^{1-\delta}\big)\rangle \\
	=  \begin{cases} (-1)^\sigma\langle  \tilde  m_{S'}\tilde q_{{R'}^*_{(2-\delta)p^k
			- t'}}, St^{S,R}\big(\tilde M_{n,s}^\delta Q_{n.s}^{1-\delta}\big)\rangle,
	&e + 2j = -[-2p^s],\\ 0 , &\text{otherwise.}
	\end{cases} \end{multline*} 
	Here, by convention, $\tilde M_{n,-1} = \tilde L_n.$
\end{thm}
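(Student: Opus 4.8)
The strategy is to extract both sides of the claimed identity from a single application of Theorem \ref{dl12} (the formula for $d_n^*P_n$) and then to play the two naturality/composition properties of $d_n^*P_n$ from Proposition \ref{md11} against each other. Concretely, I would start from a one-dimensional class and build up: apply $d_k^*P_k$ to $x$ or $y$ in $H^*(E^1)$ to get $U_{k+1}$ (up to the scalar $(-h!)^k$) or $V_{k+1}$, and dually apply $d_n^*P_n$ to $U_{k+1}$ and $V_{k+1}$ themselves, viewed as elements of $H^*(BE^{k+1})$. The composition law $d_n^*P_n = d_{n-s}^*P_{n-s}\,d_s^*P_s$ in part (ii), together with part (iii), identifies the "iterated" power operation on $x$ (resp. $y$) with the power operation applied to $U_{k+1}$ (resp. $V_{k+1}$); expanding both descriptions via Theorem \ref{dl12} produces, on one side, coefficients indexed by pairs $(S,R)$ acting on $\tilde M_{n,s}^{\delta}Q_{n,s}^{1-\delta}$, and on the other side coefficients indexed by $(S',R')$ acting on $U_{k+1}^{\delta}V_{k+1}^{1-\delta}$. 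The substitution $R \mapsto R_q^*$ is exactly the bookkeeping that records how the "top" exponent $r_0 = q - k - 2\sum r_i$ is determined by the rest of $R$ once the degree $q$ is fixed, which is why the pairing is stated against $\tilde q_{R^*_{(2-\delta)p^n - e - 2j - t}}$ rather than against a free $\tilde q_H$.

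The second ingredient is the multiplicativity in part (i): $d_n^*P_n(uv) = (-1)^{nhqr} d_n^*P_n u\, d_n^*P_n v$. This is what lets me pass between the $\delta = 0$ and $\delta = 1$ cases and, more importantly, between $U_{k+1}^e\gamma_j(v_{k+1})$-type monomials and the generators — the divided-power structure on the dual side appears precisely because $P_n$ is multiplicative and the dual of a polynomial algebra on one generator is a divided power algebra. I would set up the dual pairing $\langle \cdot,\cdot\rangle$ so that $\langle \tilde m_S \tilde q_H, -\rangle$ reads off the coefficient of $\tilde M_S \tilde Q^H$ in the modular-invariant expansion of $d_n^*P_n(-)$, and similarly $\langle u_{k+1}^e\gamma_j(v_{k+1}), -\rangle$ reads off the coefficient of $U_{k+1}^e V_{k+1}^j$. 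With these identifications, both sides of the asserted equation become two different names for the same structure constant of $d_n^*P_n$ expanded in two stages, and the theorem reduces to checking that the two expansions agree. The condition $e + 2j = -[-2p^s]$ (equivalently $e + 2j = 2p^s - 2$ when $s \geqslant 1$, with the degenerate value at $s = -1, 0$ handled by the convention $\tilde M_{n,-1} = \tilde L_n$) is just the degree-matching constraint: it says the monomial $U_{k+1}^e V_{k+1}^j$ lives in the right cohomological dimension to pair nontrivially, and all other monomials are killed.

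The main obstacle — and the part I would budget the most care for — is the sign $\sigma = r(S,R) + r(S',R') + s + \delta + (t + [-2p^s])t' + nhk\delta$. Each of the maneuvers above contributes signs: Theorem \ref{dl12} carries the sign $(-1)^{r(S,R)}$ and the scalar $\mu(q)^n$; part (i) carries $(-1)^{nhqr}$; part (iii) carries $(-h!)^k$ and the alternating signs $(-1)^{s+1}$, $(-1)^s$ inside the expansions of $U_{n+1}$, $V_{n+1}$; and reindexing $R \mapsto R_q^*$ permutes factors in the monomials, producing further signs from the graded-commutativity of the exterior generators $\tilde M_{n,s_i}$ and $\tilde m_{m,s_j}$ (this is the source of the $(t + [-2p^s])t'$ term, a count of transpositions of odd-degree generators past each other). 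I would track these by fixing a single normalization of all dual bases up front, computing one small case (say $n = k$, $\delta = 0$, $S = S' = \emptyset$) completely by hand to pin down the conventions, and only then assembling the general sign as a sum of the individual contributions, verifying that the scalars $\mu(q)^n$ and $(h!)^{\text{powers}}$ cancel against the $\mu(q')^k$ and $(h!)^{\text{powers}}$ on the other side so that no nontrivial unit survives in $\mathbb{Z}/p$. Everything else is a formal consequence of Proposition \ref{md11} and Theorem \ref{dl12}; the sign $\sigma$ is where an honest, careful computation is unavoidable.
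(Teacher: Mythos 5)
Your strategy --- computing $d^*_{n+k}P_{n+k}$ of the one-dimensional class $x$ (resp.\ $y$) in two ways via the composition law of Proposition \ref{md11}(ii), namely as $d_n^*P_n$ applied to $U_{k+1}$ (resp.\ $V_{k+1}$) and as $d_k^*P_k$ applied to the expansion of $U_{n+1}$ (resp.\ $V_{n+1}$) from Proposition \ref{md11}(iii), then expanding both by Theorem \ref{dl12} and pairing against the dual bases --- is exactly the paper's proof, including your identification of $e+2j=-[-2p^s]$ as the degree-matching constraint and of the permutation of the two blocks of variables as the source of the $nhk\delta$ contribution to $\sigma$ (the paper's relation $U=(-1)^{nkh}U'$). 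The only part you leave unexecuted is the explicit sign bookkeeping, which the paper settles with the scalar identities $(-h!)^{-(s+1)/(|s|+1)}\mu(p^n+[-2p^s])=(-1)^{nh}$ and $(-h!)^{-1}\mu(p^k)=(-1)^{hk}$; your plan for tracking those signs is sound.
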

\begin{proof} We prove the theorem for $\delta = 1$. For $\delta = 0$, it is similarly proved. We set
\begin{align*} U &=  U_{n +  k +1}(x_1,\ldots,x_k,x'_1,\ldots,x'_n,x,y_1,\ldots,
y_k,y'_1,\ldots,y'_n,y),\\ 
U' &= U_{n + k +1}(x'_1,\ldots,x'_n,x_1,\ldots,x_k,x,y'_1,\ldots,
y'_n,y_1,\ldots,y_k,y). \end{align*} 
It is easy to verify that
\begin{align*} U \ =\ (-1)^{nkh}U'\ .\tag a\end{align*}
Computing directly from Proposition \ref{md11} gives
 \begin{align*} U &=   (-h!)^{-k}d^*_kP_kU_{n+1}(x'_1,\ldots,x'_n,x,y'_1,\ldots,
y'_n,y)\tag b\\ 
&=   (-h!)^{-k}(-1)^nd_k^*P_k\big(\sum_{s   =  -1}^{n  -  1}
(-1)^{s+1}\tilde M_{n,s}y^{p^s}\big)\\ 
&= (-1)^n \sum_{s=-1}^{n-1} (-h!)^{-(s+1)k/(|s|
	+ 1)}(-1)^{s+1}(d_k^*P_k\tilde M_{n,s})V_{k+1}^{p^s}\ .
\end{align*} 
Here by convention $y^{1/p} = x$, and $V_{k+1}^{1/p} = U_{k+1}$.

We observe that $\dim\tilde M_{n,s} =  p^n  +  [-2p^s]$. According to Theorem \ref{dl12} we have 
\begin{align*}  d^*_kP_k\tilde M_{n,s} = \mu(p^n +
[-2p^s])^k\sum_{S,R} (-1)^{r(S,R)}\tilde M_S\tilde  Q^{R^*_{p^n+[-2p^s]-t}}St^{S,R}\tilde M_{n,s} .\tag c\end{align*} 
A simple computation shows that
\begin{align*} (-h!)^{ -(s+1)/(|s| +  1)}\mu(p^n +\  [-2p^s]) =  (-1)^{nh} .
\tag d\end{align*} 
Combining (b),(c) and (d) we get
$$ U = \sum_{s=-1}^{n-1}\Big(\sum_{S,R} (-1)^{n(kh+1)+r(S,R)+s+1}\tilde M_S
\tilde  Q^{R^*_{p^n   +   [-2p^s]-t}}   St^{S,R}\tilde   M_{n,
	s}\Big)V_{k+1}^{p^s} .$$ 
From this, we see that it implies
\begin{align*}  &(-1)^{r(S,R)+n(hk+1)+s+1}\langle \tilde m_S\tilde  q_{R^*_{p^n - e - 2j - t}} \otimes   \tilde   m_{S'}\tilde   q_{{R'}^*_{p^k-t'}}\otimes u_{k+1}^e\gamma_j(v_{k+1}), U \rangle\tag e \\ 
&\qquad  = \begin{cases}  (-1)^{tt'}\langle  \tilde  m_{S'}\tilde q_{{R'}^*_{p^k -
		t'}}, St^{S,R}(\tilde M_{n,s})\rangle, &e + 2j = -[-2p^s],\\
0\ , &\text{otherwise.}\end{cases}\end{align*} 

On the other hand, from Proposition \ref{md11} and Theorem \ref{dl12} we have
\begin{align*} U' &=  (-h!)^{-n}d_n^*P_nU_{k+1}(x_1,\ldots,x_k,x,y_1,\ldots,y_k,
y)\\ 
&=(-h!)^{-n}\mu(p^k)^n \sum_{S',R'} (-1)^{r(S',R')}
\tilde M_{S'}\tilde Q^{{R'}^*_{p^k-t'}}St^{S',R'}U_{k+1}.
\end{align*}  
From this and the fact that $(-h!)^{-1}\mu(p^k) = (-1)^{hk}$, we get
\begin{align*} &(-1)^{r(S',R') + n(hk + 1)}\langle  \tilde m_S\tilde q_{R^*_{p^n
		- e - 2j - t}} \otimes \tilde m_{S'}\tilde q_{{R'}^*_{p^k-t'}}
\otimes u_{k+1}^e\gamma_j(v_{k+1}), U' \rangle\tag f \\
&\quad = (-1)^{t'e}\langle \tilde m_S\tilde q_{R^*_{p^n  - e - 2j
		- t}} \otimes u_{k+1}^e\gamma_j(v_{k+1}),   St^{S',  R'}U_{k+1}
\rangle . \end{align*} 
Comparing (e) with (f) and using (a), we obtain the theorem for $\delta = 1$.
\end{proof}

Since the basis $\{\tilde M_{S'}\tilde Q^{H'}\}$ of $E(\tilde M_{n,0},\ldots , \tilde M_{n,n-1})\otimes P(\tilde L_n,Q_{n,1}, \ldots,Q_{n,n-1})$ is dual
to the basis $\{\tilde m_{S'}\tilde q^{H'}\}$ of $E(\tilde m_{n,0},\ldots , \tilde m_{n,n-1})\otimes \Gamma(\tilde \ell_n,q_{n,1}, \ldots,q_{n,n-1})$. Hence, we easily
obtain from Theorem \ref{dl21}
\begin{corl}\label{hq22} Set 
$$ C_{S',R'} = \langle \tilde m_S\tilde q_{R^*_{(2-\delta)p^n + [-2p^s] - t}}\otimes \gamma_{p^s}(v_{k+1}), St^{S',R'}\big(U_{k+1}^\delta V_{k+1}^{1-\delta}\big)\rangle. $$
	We have
	$$ St^{S,R}\big(\tilde M_{n,s}^\delta Q_{n,s}^{1-\delta}\big) =
	\sum_{S' ,R'}   (-1)^\sigma   C_{S',R'}\tilde
	M_{S'}\tilde Q^{{R'}^*_{(2-\delta)p^k-t'}}.$$ 
Here, by convention, $\gamma_{1/p}(v_{k+1}) = u_{k+1}.$ 
\end{corl}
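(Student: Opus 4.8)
The plan is to deduce Corollary~\ref{hq22} directly from Theorem~\ref{dl21} by a standard duality-expansion argument, using only the pairing identity just proved. First I would fix $(S,R)$ with $\ell(R)=k$ and a value of $s$ with $-\delta\leqslant s\leqslant n-\delta$, and expand the element $St^{S,R}\big(\tilde M_{n,s}^\delta Q_{n,s}^{1-\delta}\big)$ of $E(\tilde M_{n,0},\ldots,\tilde M_{n,n-1})\otimes P(\tilde L_n,Q_{n,1},\ldots,Q_{n,n-1})$ in the monomial basis $\{\tilde M_{S'}\tilde Q^{H'}\}$. A general basis monomial appearing here is $\tilde M_{S'}\tilde Q^{H'}$ with $H'=(h'_0,\ldots,h'_{n-1})$; the claim of the corollary is that the only monomials occurring are those of the special shape $\tilde Q^{{R'}^*_{(2-\delta)p^k-t'}}$ coming from some $R'$ of length $n$, and that the coefficient of such a monomial is $(-1)^\sigma C_{S',R'}$.

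The key step is extracting each coefficient by pairing against the dual basis element. Since $\{\tilde M_{S'}\tilde Q^{H'}\}$ and $\{\tilde m_{S'}\tilde q^{H'}\}$ are dual bases, the coefficient of $\tilde M_{S'}\tilde Q^{H'}$ in $St^{S,R}\big(\tilde M_{n,s}^\delta Q_{n,s}^{1-\delta}\big)$ equals $\langle \tilde m_{S'}\tilde q^{H'},\, St^{S,R}\big(\tilde M_{n,s}^\delta Q_{n,s}^{1-\delta}\big)\rangle$. Now I would invoke Theorem~\ref{dl21} with the roles reversed: its right-hand side is (up to the sign $(-1)^\sigma$) precisely such a pairing, provided $H'={R'}^*_{(2-\delta)p^k-t'}$ for some $R'$ of length $n$ and $\ell(S')=t'$. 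The theorem then identifies this quantity with $(-1)^\sigma$ times the left-hand pairing $\langle \tilde m_S\tilde q_{R^*_{(2-\delta)p^n-e-2j-t}}\otimes u_{k+1}^e\gamma_j(v_{k+1}),\, St^{S',R'}\big(U_{k+1}^\delta V_{k+1}^{1-\delta}\big)\rangle$, which is nonzero only when $e+2j=-[-2p^s]$. Taking that unique admissible choice — namely $e=\delta$ hidden in the convention $\gamma_{1/p}(v_{k+1})=u_{k+1}$, so that $u_{k+1}^\delta\gamma_{\text{rest}}(v_{k+1})=\gamma_{p^s}(v_{k+1})$ after reindexing, and the grading index becomes $(2-\delta)p^n+[-2p^s]-t$ — this pairing is exactly $C_{S',R'}$. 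Hence the coefficient is $(-1)^\sigma C_{S',R'}$, and it vanishes for all monomials $\tilde M_{S'}\tilde Q^{H'}$ not of the prescribed form, which is what the corollary asserts.

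The main obstacle I expect is purely bookkeeping: verifying that the ``otherwise'' case of Theorem~\ref{dl21} really does kill every monomial $\tilde M_{S'}\tilde Q^{H'}$ whose exponent sequence $H'$ is \emph{not} of the form ${R'}^*_{(2-\delta)p^k-t'}$, and correspondingly that the combined constraints ($e+2j=-[-2p^s]$ together with the forced degree of the $\gamma$-term) pin down a \emph{unique} pair $(e,j)$, namely the one encoded by $\gamma_{p^s}(v_{k+1})$ with the convention $\gamma_{1/p}(v_{k+1})=u_{k+1}$ handling $s=-1$, i.e.\ $\delta=1,\ s=-1$, where $\tilde M_{n,-1}=\tilde L_n$. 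One must also check the dimension/grading matches so that $R^*_{(2-\delta)p^n-e-2j-t}$ in the theorem becomes $R^*_{(2-\delta)p^n+[-2p^s]-t}$ in the definition of $C_{S',R'}$. These are all routine consequences of the definitions of $R^*_q$, the convention $-[-2p^s]=2p^s$ for $s\geqslant 0$ and $=2$ for $s=-1$, and the fact that a monomial in the $\tilde L_n, Q_{n,i}$ lies in the image of the map $R'\mapsto {R'}^*$ iff its last coordinate is forced by the total degree; no genuinely new idea is needed beyond Theorem~\ref{dl21} and the self-duality of the monomial bases.
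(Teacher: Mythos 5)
Your proof is correct and is essentially the paper's own argument: the paper simply remarks that the corollary follows from Theorem~\ref{dl21} because $\{\tilde M_{S'}\tilde Q^{H'}\}$ and $\{\tilde m_{S'}\tilde q^{H'}\}$ are dual bases, which is exactly your coefficient-extraction by pairing. One small slip in your bookkeeping: the parity of $e+2j=-[-2p^s]$ forces $(e,j)=(0,p^s)$ for $s\geqslant 0$ and $(e,j)=(1,0)$ only for $s=-1$, so ``$e=\delta$'' is not literally right, but your description of the unique admissible term as $\gamma_{p^s}(v_{k+1})$ with the convention $\gamma_{1/p}(v_{k+1})=u_{k+1}$ is what matters and is correct.
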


By an analogous argument we obtain

\begin{corl}\label{hq23} Set 
	$ C_{s,S,R} = \langle \tilde m_{S'}\tilde q_{{R'}^*_{(2-\delta)p^k - t'}},
	St^{S,R}\big(\tilde M_{n,s}^\delta Q_{n,s}^{1-\delta}\big)\rangle. $
	We have
	$$ St^{S',R'}\big(\tilde U_{k+1}^\delta V_{k+1}^{1-\delta}\big) =
	\sum_{s=-\delta}^{n-\delta}\Big( \sum_{S,R} (-1)^\sigma C_{s,S,R}
	\tilde M_S \tilde Q^{R^*_{(2-\delta)p^n+[-2p^s]-t}}\Big)V_{k+1}^{p^s}\ ,$$
	Here, by convention, $V_{k+1}^{1/p} = U_{k+1}$ .
\end{corl}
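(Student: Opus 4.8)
The plan is to mirror the proof of Theorem \ref{dl21}, but now extracting the expansion of $St^{S',R'}(U_{k+1}^\delta V_{k+1}^{1-\delta})$ in the basis $\{\tilde M_S\tilde Q^H\}\otimes\{V_{k+1}^{p^s}\}$ rather than solving for $St^{S,R}$ of a M\`ui/Dickson invariant. I would take $\delta=1$ first, the case $\delta=0$ being entirely analogous. Recall from step (b) in the proof of Theorem \ref{dl21} that, with $U$ as defined there,
\begin{equation*}
U = \sum_{s=-1}^{n-1}\Big(\sum_{S,R}(-1)^{n(kh+1)+r(S,R)+s+1}\,\tilde M_S\tilde Q^{R^*_{p^n+[-2p^s]-t}}\,St^{S,R}\tilde M_{n,s}\Big)V_{k+1}^{p^s}.
\end{equation*}
By relation (a), $U=(-1)^{nkh}U'$, and from the last displayed computation of $U'$ in the proof of Theorem \ref{dl21},
\begin{equation*}
U' = (-1)^{n(hk+1)}\sum_{S',R'}(-1)^{r(S',R')}\,\tilde M_{S'}\tilde Q^{{R'}^*_{p^k-t'}}\,St^{S',R'}U_{k+1}.
\end{equation*}
So the single element $U=U'$ (up to the known sign) already contains both expansions; the corollary is the statement obtained by equating them after passing to the dual description.

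The key steps, in order, are as follows. First, fix $S',R'$ with $\ell(R')=n$ and isolate the coefficient of $St^{S',R'}U_{k+1}$ in $U'$, namely $(-1)^{n(hk+1)+r(S',R')}\tilde M_{S'}\tilde Q^{{R'}^*_{p^k-t'}}$. Second, re-expand $St^{S',R'}U_{k+1}$ itself in the basis $\{\tilde M_S\tilde Q^H\otimes V_{k+1}^{p^s}\}$: by definition its coefficient at $\tilde M_S\tilde Q^{R^*_{p^n+[-2p^s]-t}}V_{k+1}^{p^s}$ is the pairing $\langle \tilde m_S\tilde q_{R^*_{p^n+[-2p^s]-t}}\otimes\gamma_{p^s}(v_{k+1}),St^{S',R'}U_{k+1}\rangle$ — which by the $e+2j=-[-2p^s]$ branch of Theorem \ref{dl21} (take $e=2j=p^s$ appropriately, i.e. the convention $\gamma_{1/p}(v_{k+1})=u_{k+1}$ for $s=-1$) equals exactly $(-1)^\sigma C_{s,S,R}$ up to the sign bookkeeping. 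Third, substitute this expansion of $St^{S',R'}U_{k+1}$ into the formula for $U'$, compare with the formula for $U$ (using $U=(-1)^{nkh}U'$), and read off that the two presentations of the same element force
\begin{equation*}
St^{S',R'}(U_{k+1}^\delta V_{k+1}^{1-\delta}) = \sum_{s=-\delta}^{n-\delta}\Big(\sum_{S,R}(-1)^\sigma C_{s,S,R}\,\tilde M_S\tilde Q^{R^*_{(2-\delta)p^n+[-2p^s]-t}}\Big)V_{k+1}^{p^s},
\end{equation*}
with the convention $V_{k+1}^{1/p}=U_{k+1}$ absorbing the $s=-\delta$ term. The reindexing $s\mapsto s$ on the range $-1\le s\le n-1$ versus $-\delta\le s\le n-\delta$ is handled precisely by this convention, just as in Theorem \ref{dl21}.

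The main obstacle I expect is purely the sign accounting: one must check that the exponent $\sigma=r(S,R)+r(S',R')+s+\delta+(t+[-2p^s])t'+nhk\delta$ appearing in Theorem \ref{dl21} is exactly what survives after combining the sign $(-1)^{n(kh+1)+r(S',R')}$ from the $U'$-expansion, the sign $(-1)^{n(kh+1)+r(S,R)+s+1}$ from the $U$-expansion, the global sign $(-1)^{nkh}$ from relation (a), and the transposition sign $(-1)^{tt'}$ (which becomes $(-1)^{(t+[-2p^s])t'}$ after accounting for the degree shift in the $\tilde M_{n,s}$ factor). Since every one of these signs has already been verified inside the proof of Theorem \ref{dl21} and packaged into $\sigma$, the verification here is bookkeeping of the same quantities in a different order, and no new computation of Steenrod actions or invariant-theoretic identities is required — the corollary is a genuine rearrangement of the theorem's proof, so I would present it as "by an analogous argument" with the one new ingredient being the substitution of the Theorem \ref{dl21} pairing formula into the expansion of $U'$.
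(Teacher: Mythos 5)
Your proposal is correct and, at its core, coincides with the paper's (unwritten) argument: the corollary follows directly from Theorem \ref{dl21} together with the fact that the monomial basis $\{\tilde M_S\tilde Q^H\otimes U_{k+1}^eV_{k+1}^j\}$ is dual to $\{\tilde m_S\tilde q_H\otimes u_{k+1}^e\gamma_j(v_{k+1})\}$, so the coefficients of $St^{S',R'}(U_{k+1}^\delta V_{k+1}^{1-\delta})$ are exactly the pairings that the theorem evaluates (nonzero only when $e+2j=-[-2p^s]$), which is precisely your second step. The detour through $U$ and $U'$ in your first and third steps is redundant --- it amounts to re-deriving Theorem \ref{dl21} --- but it introduces no error.
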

\section{Applications}\label{s4}

Fix a non-negative integer $r$. Let $\alpha_i = \alpha_i(r)$ denote the $i$-th coefficient in $p$-adic expansion of $r$. That means
$$ r = \alpha_0p^0 + \alpha_1p^1 +\ldots$$
with $0 \leqslant \alpha_i < p,\, i \geqslant 0$. Set $\alpha_i = 0$ for $i < 0$.

The aim of the section is to prove the following four theorems:

\begin{thm}\label{dl31} Set  
$c = \frac{(h-1)!}{(h-\alpha_{k-1})!\prod_{0\leqslant i<k} 
(\alpha_i-\alpha_{i-1})!},   \ t_i = \alpha_i-\alpha_{i-1},\ 0 \leqslant i < k$. We have 
\begin{align*} P^rU_{k + 1} &= \begin{cases}\displaystyle{
	c\Big( hU_{k+1} + \sum_{u=0}^{k-1}t_uV_{k+1}\tilde M_{k,u}Q_{k,u}^{-1}\Big)
	\prod_{i=0}^{k-1}Q_{k,i}^{t_i}},\ &2r < p^k ,\,  t_i
	\geqslant 0,\ i < k ,\\ 
	0\ ,  &\text{otherwise.}\end{cases}
	\end{align*}
\end{thm}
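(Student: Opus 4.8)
The plan is to specialize Corollary \ref{hq23} to the case $\delta = 1$, $S' = \emptyset$, $R' = (r, 0, \ldots, 0)$ (so that $St^{S',R'} = P^r$ and $t' = 0$), with $n = k$, so that the left-hand side becomes $P^r U_{k+1}$. The corollary then expresses $P^r U_{k+1}$ as a sum over $s$ (from $-1$ to $k-1$) and over pairs $(S,R)$ with $\ell(R) = k$, weighted by the coefficients $C_{s,S,R} = \langle \tilde m_{S'}\tilde q_{{R'}^*_{p^k - t'}}, St^{S,R}(\tilde M_{k,s})\rangle$ and an explicit sign $(-1)^\sigma$, times $V_{k+1}^{p^s}$ and a monomial $\tilde M_S \tilde Q^{R^*_{p^k + [-2p^s] - t}}$. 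Thus the problem reduces to identifying which triples $(s, S, R)$ contribute a nonzero pairing $\langle \tilde q_{(r,0,\ldots,0)^*_{p^k}}, St^{S,R}(\tilde M_{k,s})\rangle$ — equivalently, reading off the coefficient of a specific divided-power monomial in $St^{S,R}(\tilde M_{k,s})$ expanded in the basis $\{\tilde M_S \tilde Q^H\}$ — and then assembling the surviving terms.

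The key computational input is the action of $\mathcal A$ on the Dickson and M\"ui invariants $\tilde M_{k,s}$ themselves (for the smaller rank, with $St^{S,R}$ of arbitrary length $\ell(R) = k$). First I would record the Cartan-type formula and the instability relations that pin down $St^{S,R}$ on the generating invariants; in particular, using $\tilde M_{k,s} = M_{k,s} L_k^{h-1}$ and the known Steenrod action on the fundamental classes via Proposition \ref{md11}(iii) and Theorem \ref{dl12}, one extracts the relevant coefficients. The combinatorial heart is the appearance of the multinomial coefficient $c = (h-1)!/\big((h-\alpha_{k-1})!\prod_{i<k}(\alpha_i - \alpha_{i-1})!\big)$: this should emerge from counting, via the $p$-adic digits $\alpha_i$ of $r$, the number of ways the total degree $r$ distributes across the exterior/polynomial factors in the expansion of $P^r$ applied to a product — precisely the kind of multinomial bookkeeping that governs $P^r$ on products of $p$-th powers. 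The constraint $2r < p^k$ and the sign conditions $t_i = \alpha_i - \alpha_{i-1} \geqslant 0$ are exactly the conditions under which the relevant divided-power index $R^*_{\ldots}$ has all nonnegative entries and the pairing is forced by degree to land on a single monomial; outside that range the degree count makes every contributing pairing vanish, giving the zero case.

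The main obstacle, I expect, is the sign. Corollary \ref{hq23} carries the factor $(-1)^\sigma$ with $\sigma = r(S,R) + r(S',R') + s + \delta + (t + [-2p^s])t' + nhk\delta$, and even after $t' = 0$ and $\delta = 1$ kill several terms, one must track $r(S,R)$, the parity of $s$, and the $nhk$ term carefully as $(S,R)$ and $s$ range over the few surviving configurations, then reconcile the total with the clean single factor of $h$ (and the $t_u$-weighted correction terms $V_{k+1}\tilde M_{k,u} Q_{k,u}^{-1}$) that appears in the statement. Organizing this: I would (i) determine the support — show only $s$ with $\alpha_{s}$-related digit conditions contribute, and that $S = \emptyset$ forces the leading term while $S = (u)$ single-element sequences produce the correction terms — (ii) evaluate $C_{s,S,R}$ on that support as an explicit product of factorials, (iii) compute $(-1)^\sigma$ on that support, and (iv) collect. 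Steps (i) and (iii) are the delicate ones; (ii) is the routine multinomial identity yielding $c$, and (iv) is bookkeeping. Throughout I would lean on Theorem \ref{dl12} to convert between the operator side and the invariant side, and on the convention $\tilde M_{k,-1} = \tilde L_k$ to treat the $s = -1$ term (which supplies the $hU_{k+1}\prod Q_{k,i}^{t_i}$ piece when combined appropriately) on the same footing as $s \geqslant 0$.
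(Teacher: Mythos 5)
Your overall strategy --- feeding $P^{r}=St^{S',R'}$ into Corollary \ref{hq23} with $\delta=1$ and reading off the coefficients $C_{s,S,R}$ --- is the right one, but the specialization $n=k$ is the wrong one, and it leaves a genuine gap. In Corollary \ref{hq23} the integer $n$ is the rank of the invariants $\tilde M_{n,s}$ occurring inside $C_{s,S,R}=\langle \tilde m_{S'}\tilde q_{{R'}^{*}_{p^{k}-t'}}, St^{S,R}(\tilde M_{n,s})\rangle$, while $k$ is already fixed by the target $U_{k+1}$; since $P^{r}=St^{\emptyset,(r,0,\ldots,0)}$ for zero-padding of any length, $n$ is a free parameter. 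Taking $n=k$ as you propose forces you to evaluate $St^{S,R}(\tilde M_{k,s})$ for all $(S,R)$ with $\ell(R)=k$ and all $-1\leqslant s\leqslant k-1$. But by the duality theorem (Theorem \ref{dl21}) those very pairings are equal, up to sign, to the coefficients of $P^{r}U_{k+1}$ that you are trying to determine, so the reduction is circular. Your proposed escape --- a ``Cartan-type formula and instability relations'' applied to $\tilde M_{k,s}=M_{k,s}L_{k}^{h-1}$ via Proposition \ref{md11}(iii) --- is not a computation: Proposition \ref{md11}(iii) describes $d_{n}^{*}P_{n}$ on the one-dimensional classes $x,y$ only, no Cartan formula for $St^{S,R}$ is set up in the paper, and the action of general $St^{S,R}$ on the determinantal classes $M_{k,s}$, $L_{k}$ is exactly the hard part.

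The paper's proof avoids this by taking $n=1$ and $R'=(r)$ of length one. Then the sum over $s$ collapses to $s\in\{-1,0\}$, and the only coefficients needed are $\langle\tilde q_{(r)^{*}_{p^{k}}},St^{\emptyset,R}\tilde L_{1}\rangle$ and $\langle\tilde q_{(r)^{*}_{p^{k}}},St^{(u),R}\tilde M_{1,0}\rangle$ with $\ell(R)=k$, where $\tilde L_{1}=y^{h}$ and $\tilde M_{1,0}=xy^{h-1}$ live in $H^{*}(B\mathbb Z/p)$. These are supplied by the explicit formula for $St^{S,R}(x^{\varepsilon}y^{b})$ (Lemma \ref{bd36}), which shows that exactly one $R$ (namely $\bar R_{0}$, resp.\ $\bar R_{u}$, built from the $p$-adic digits $\alpha_{i}$ of $r$) survives, with a multinomial coefficient producing $c$ and the conditions $t_{i}\geqslant 0$; the $s=-1$, $S=\emptyset$ term yields $hU_{k+1}\prod Q_{k,i}^{t_{i}}$ and the $s=0$, $S=(u)$ terms yield the corrections $t_{u}V_{k+1}\tilde M_{k,u}Q_{k,u}^{-1}\prod Q_{k,i}^{t_{i}}$, much as you anticipated. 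Without the passage to $n=1$ and an explicit computation of $St^{S,R}$ on $x^{\varepsilon}y^{b}$, your plan cannot produce the coefficient $c$ or the sign, so the argument as proposed does not go through.
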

\begin{thm}\label{dl32} Set $c = \frac{(h-\alpha_s)(h-1)!}{(h-\alpha_{n-1})!
		(\alpha_s+1-\alpha_{s-1})!\prod_{s\not= i<n} (\alpha_i-\alpha_{i-1})!},
	t_i = \alpha_i-\alpha_{i-1},\ -1 \leqslant i \ne s,\
	t_s = \alpha_s+1-\alpha_{s-1}$,
	with $-1 \leqslant s \leqslant n - 1$. We have
	\begin{align*} P^r\tilde M_{n,s} &=
	\begin{cases}  \displaystyle{c\sum_{u=-1}^st_u\tilde M_{n,u}
	Q_{n,u}^{t_u-1}\prod_{u\not= i<n} Q_{n,i}^{t_i}} ,
	&2r\leqslant p^n+[-2p^s], \alpha_i \geqslant \alpha_{i-1},\\
	&s \not= i < n,\ \alpha_s+1\geqslant \alpha_{s-1}\ ,\\
	0\ ,&\text{otherwise.}\end{cases}
	\end{align*}   
\end{thm}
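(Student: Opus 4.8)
The plan is to extract the formula from the duality theorem, feeding in Theorem~\ref{dl31} as the only substantial input. Concretely, I would apply Corollary~\ref{hq22} with $\delta=1$, $S=\emptyset$ and $R=(r)$ regarded as a sequence of length $k$ (so $St^{S,R}=P^r$ and $t=\ell(S)=0$), for an arbitrary $k\geqslant 1$; the answer must then be independent of $k$, which is a useful internal check. This gives
\[
 P^r\tilde M_{n,s}=\sum_{S',R'}(-1)^{\sigma}\,C_{S',R'}\,\tilde M_{S'}\tilde Q^{{R'}^{*}_{p^{k}-t'}},\qquad
 C_{S',R'}=\bigl\langle\,\tilde q_{R^{*}_{p^{n}+[-2p^{s}]}}\otimes\gamma_{p^{s}}(v_{k+1}),\ St^{S',R'}U_{k+1}\,\bigr\rangle,
\]
the sum running over $(S',R')$ with $\ell(R')=n$. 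Since $R$ has the single entry $r$, the class $\tilde q_{R^{*}_{q}}$ is $\gamma_{q-2r}(\tilde\ell_{k})\gamma_{r}(q_{k,1})$, so $C_{S',R'}$ is exactly the coefficient of $\tilde L_{k}^{\,p^{n}+[-2p^{s}]-2r}Q_{k,1}^{\,r}V_{k+1}^{\,p^{s}}$ in $St^{S',R'}U_{k+1}$; in particular the whole right-hand side vanishes unless $p^{n}+[-2p^{s}]-2r\geqslant 0$, which already produces the inequality $2r\leqslant p^{n}+[-2p^{s}]$ of the statement. A preliminary lemma is needed here: that $St^{S',R'}U_{k+1}$ lands in the subalgebra $E(\tilde M_{k,0},\dots,\tilde M_{k,k-1})\otimes P(\tilde L_{k},Q_{k,1},\dots,Q_{k,k-1})\otimes E(U_{k+1})\otimes P(V_{k+1})$ of $H^{*}(BE^{k+1})$, so that the pairing is literally the one of Corollary~\ref{hq22}. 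This follows because $U_{k+1}$ and $V_{k+1}$ are fixed by the subgroup of $GL_{k}(\mathbb Z/p)$ of matrices of square determinant acting on the first $k$ coordinates, this action is $\mathcal A$-linear, and that subgroup of invariants of $H^{*}(BE^{k})$ is precisely $\mbox{ImRes}(E^{k},A_{p^{k}})$.

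Next I would make $St^{S',R'}U_{k+1}$ explicit. By Proposition~\ref{md11}(ii) and (iii) one has $d^{*}_{n}P_{n}U_{k+1}=(-h!)^{-k}d^{*}_{n+k}P_{n+k}x=(-h!)^{n}U_{n+k+1}$, while Theorem~\ref{dl12} applied to $z=U_{k+1}$ (with $q=p^{k}$) expands $d^{*}_{n}P_{n}U_{k+1}$ as $\mu(p^{k})^{n}\sum_{S',R'}(-1)^{r(S',R')}\tilde M_{S'}\tilde Q^{{R'}^{*}_{p^{k}-t'}}\otimes St^{S',R'}U_{k+1}$. Hence the $St^{S',R'}U_{k+1}$ are obtained by taking $U_{n+k+1}$, splitting off its last slot through Proposition~\ref{md11}(iii), and rewriting $\tilde L_{n+k}$ and the $\tilde M_{n+k,\bullet}$ via the factorisation $L_{n+k}=L_{n}V_{n+1}\cdots V_{n+k}$ together with repeated use of Proposition~\ref{md11}(iii) for each $V_{n+j}$. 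Equivalently, the coefficients that actually enter the pairings above are already encoded in Theorem~\ref{dl31}: substituting $U_{k+1}=(-1)^{k}\bigl(\tilde L_{k}x+\sum_{a=0}^{k-1}(-1)^{a+1}\tilde M_{k,a}y^{p^{a}}\bigr)$ and $V_{k+1}=(-1)^{k}\sum_{a=0}^{k}(-1)^{a}Q_{k,a}y^{p^{a}}$ into the right-hand side of Theorem~\ref{dl31} and collecting the coefficient of each monomial $\tilde L_{k}^{m}Q_{k,1}^{r}V_{k+1}^{p^{s}}$ yields all the needed $C_{S',R'}$ as explicit expressions in the $p$-adic digits $\alpha_{i}(r)$.

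Substituting these back into the first display and organising the scalars — the factors $\mu(q)^{n}$, the fraction $\frac{(h-1)!}{(h-\alpha_{k-1})!\prod(\alpha_{i}-\alpha_{i-1})!}$ from Theorem~\ref{dl31}, and the sign $(-1)^{\sigma}$ of Theorem~\ref{dl21} with $\sigma=r(S,R)+r(S',R')+s+1+(t+[-2p^{s}])t'+nhk$ — reduces the theorem to a combinatorial identity for the digits $\alpha_{i}=\alpha_{i}(r)$. Namely, one must show that only the indices $u$ with $-1\leqslant u\leqslant s$ survive, that the coefficient of $\tilde M_{n,u}Q_{n,u}^{t_{u}-1}\prod_{u\neq i<n}Q_{n,i}^{t_{i}}$ is the single product $t_{u}\cdot(h-\alpha_{s})(h-1)!\big/\big((h-\alpha_{n-1})!\,(\alpha_{s}+1-\alpha_{s-1})!\prod_{s\neq i<n}(\alpha_{i}-\alpha_{i-1})!\big)$, and that everything vanishes outside $\alpha_{i}\geqslant\alpha_{i-1}$ ($i\neq s$), $\alpha_{s}+1\geqslant\alpha_{s-1}$. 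This is where essentially all the difficulty lies: one has to collapse a multiple sum of products of binomial coefficients $\binom{h-1}{\alpha_{0}}\binom{h-1-\alpha_{0}}{\alpha_{1}-\alpha_{0}}\cdots$ — with the step at position $s$ shifted by one, reflecting the extra $y^{p^{s}}$ attached to $\tilde M_{n,s}$ as opposed to $Q_{n,s}$ — down to the stated closed form, and to recognise that the support conditions are exactly the statements that the relevant factorials and $h-\alpha_{n-1}$ are non-negative. I expect this digit combinatorics, together with the accompanying sign bookkeeping, to be the main obstacle.

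Finally I would dispose of the boundary case $s=-1$ (where $\tilde M_{n,-1}=\tilde L_{n}$) at the outset, either by running the same argument with the convention $y^{1/p}=x$ used in the proof of Theorem~\ref{dl21}, or directly: starting from $\tilde L_{n}=L_{n}^{h}$ and the multiplicativity of the total Steenrod power, $P(L_{n})=\prod_{\ell}(\ell+\ell^{p})$ over the monic linear forms $\ell$ in $y_{1},\dots,y_{n}$, which one expands in terms of the Dickson invariants $Q_{n,i}$; this supplies the base case and also explains the shape of the constant $c$ and of the support condition in the special instance $\tilde L_{n}$.
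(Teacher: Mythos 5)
Your overall frame---apply Corollary \ref{hq22} with $\delta=1$ so that $P^r\tilde M_{n,s}$ is expressed through the coefficients $C_{S',R'}$ of $St^{S',R'}U_{k+1}$---is indeed the paper's strategy, but the step where you actually produce those coefficients has a genuine gap. Corollary \ref{hq22} requires, for \emph{every} pair $(S',R')$ with $\ell(R')=n$ (in particular $S'=(u)$ nonempty and $R'$ with several nonzero entries), one prescribed coefficient of $St^{S',R'}U_{k+1}$. Your claim that these are ``already encoded in Theorem \ref{dl31}'' is false: Theorem \ref{dl31} computes only $P^rU_{k+1}=St^{\emptyset,(r)}U_{k+1}$, and the Milnor operations $St^{(u),R'}$ with $\ell(R')=n$ are independent basis elements of $\mathcal A$ whose action is not determined by that of the $P^r$; substituting the expansions of $U_{k+1}$ and $V_{k+1}$ into the right-hand side of Theorem \ref{dl31} therefore yields none of the needed $C_{S',R'}$. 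Your fallback route (expand $U_{n+k+1}$ via $d_n^*P_nU_{k+1}=(-h!)^nU_{n+k+1}$ and the factorisation of $L_{n+k}$) is sound in principle but is left entirely unexecuted, and for general $k$ it is much harder than necessary.

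The paper sidesteps all of this by taking $k=1$: Corollary \ref{hq22} with $\delta=k=1$ needs only $St^{(u),R'}U_2$, which is computed in closed form in Proposition \ref{md37} (proved from $U_2=M_{2,1}L_1^{h-1}$, Proposition \ref{md11} and Lemma \ref{bd38}). With that formula in hand, $C_{(u),R'}$ is seen to vanish unless $u\leqslant s$ and $R'=\bar R_u$ is the one sequence built from the $p$-adic digits of $r$, so the ``multiple sum of products of binomial coefficients'' you identify as the main obstacle collapses to the single product $c\,t_u$ before any summation is required; only the sign check $r((u),\bar R_u)+r+s+1\equiv nh\pmod 2$ remains. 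Since your proposal neither supplies a substitute for Proposition \ref{md37} nor carries out the digit combinatorics it defers, it does not yet constitute a proof. (Your preliminary observation that $St^{S',R'}U_{k+1}$ lies in the relevant invariant subalgebra, the derivation of the bound $2r\leqslant p^n+[-2p^s]$, and the treatment of $s=-1$ via $\tilde M_{n,-1}=\tilde L_n$ are all fine.)
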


The following two theorems were first proved in \cite{hum} by another method.

\begin{thm}[H\uhorn ng-Minh \cite{hum}]\label{dl33} 
\begin{align*}P^rV_{k+1} = \begin{cases} V_{k+1}^p, &r = p^k,\\
	\frac{(-1)^{\alpha_{k-1}}\alpha_{k-1}!}{\prod_{\scriptstyle 0
			\leqslant i<k}(\alpha _i-\alpha _{i-1})!}\displaystyle{V_{k+1}\prod_{i=0}^{k-1}
	Q_{k,i}^{\alpha  _i-\alpha  _{i-1}}},&
	r < p^k , \ \alpha _i \geqslant \alpha _{i-1},\ i <k,\\
	0\ , &\text{otherwise.}\end{cases}
\end{align*}
\end{thm}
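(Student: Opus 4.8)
The plan is to derive Theorem \ref{dl33} directly from Corollary \ref{hq23} (or equivalently Corollary \ref{hq22}) by specializing to the case $\delta = 0$, $R' = (r, 0, \ldots, 0)$, $S' = \emptyset$, so that $St^{S',R'} = P^r$, and computing the dual pairings explicitly. First I would record the relevant dimension and degree bookkeeping: since $\dim V_{k+1} = 2p^k$, the operation $P^r V_{k+1}$ lands in degree $2p^k + 2r(p-1)$, and expanding $V_{k+1}$ via Proposition \ref{md11}(iii) as $(-1)^k\sum_{s=0}^k (-1)^s Q_{k,s} y^{p^s}$ shows that $P^r V_{k+1}$ must be a $\mathbb Z/p$-linear combination of monomials $Q_{n,s_1}\cdots$ in the Dickson invariants $Q_{k,0}, \ldots, Q_{k,k-1}$ (with $Q_{k,0} = \tilde L_k^2$) times a single factor $V_{k+1}^{p^s}$; a degree count pins down $s \in \{0, 1\}$, giving the two non-trivial cases $r = p^k$ (whence the leading term $V_{k+1}^p$) and $r < p^k$.

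Next I would set up the coefficient extraction. By Corollary \ref{hq23} with $\delta = 0$, the coefficient of $\tilde M_S \tilde Q^{R^*} V_{k+1}^{p^s}$ in $P^r V_{k+1}$ is (up to the sign $(-1)^\sigma$) the pairing $\langle \tilde m_{S'} \tilde q_{{R'}^*_{2p^k - t'}}, St^{S,R}(Q_{n,s})\rangle$ with $S' = \emptyset$, $R' = (r,0,\ldots,0)$, $t' = 0$. So everything reduces to understanding $St^{S,R}(Q_{n,s})$ for those $(S,R)$ that can contribute, and then reading off the component against the monomial dual to $\tilde m_{S'}\tilde q_{{R'}^*}$. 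Here ${R'}^*_{2p^k} = (2p^k - 2r, r, 0, \ldots, 0)$ after the shift $R' \mapsto R'^*$, and the constraint that this be a legitimate exponent sequence — all entries $\geqslant 0$ — forces $2r \leqslant 2p^k$, i.e. $r \leqslant p^k$, consistent with the trichotomy above. The combinatorial heart is then identifying which operations $St^{S,R}$ of the correct bidegree actually act non-trivially on the Dickson invariant $Q_{n,s}$, and with what coefficient; this is where the multinomial coefficient $\alpha_{k-1}!/\prod(\alpha_i - \alpha_{i-1})!$ and the sign $(-1)^{\alpha_{k-1}}$ will emerge.

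The cleanest route to that combinatorial input is probably to invoke the Cartan formula and the known action of $P^j$ on a single class $y^{p^i}$ (namely $P^j(y^{p^i}) = \binom{p^i}{j} y^{p^i + j(p-1)}$, which vanishes unless $j$ is a multiple of $p^i$, in which case the binomial coefficient is read off the $p$-adic digits of $r$), applied to the explicit formula $V_{k+1} = (-1)^k\sum_s (-1)^s Q_{k,s} y^{p^s}$. Comparing coefficients of $y^{p^\ell}$ on both sides of $P^r V_{k+1} = P^r\big((-1)^k\sum_s (-1)^s Q_{k,s} y^{p^s}\big)$ and matching against the target expansion $(-1)^k\sum_\ell (-1)^\ell (P^? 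Q_{k,\ell})\,(\text{stuff})\,y^{p^\ell}$ yields a recursion in $k$ (using Proposition \ref{md11}(ii), $d_n^*P_n = d_{n-1}^*P_{n-1}d_1^*P_1$) that peels off one variable at a time; the product $\prod_{i=0}^{k-1} Q_{k,i}^{\alpha_i - \alpha_{i-1}}$ is exactly what accumulates, and the constraint $\alpha_i \geqslant \alpha_{i-1}$ is precisely the condition that all exponents in this product are non-negative.

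The main obstacle I anticipate is the sign and coefficient bookkeeping: keeping track of $(-1)^\sigma$ from Corollary \ref{hq23} (with $\sigma = r(S,R) + r(S',R') + s + \delta + (t + [-2p^s])t' + nhk\delta$, here $\delta = 0$, $t' = 0$, so $\sigma = r(S,R) + r(S',R') + s$), the conventions $V_{k+1}^{1/p} = U_{k+1}$ and $\gamma_{1/p}(v_{k+1}) = u_{k+1}$, and the normalizing factors $(-h!)^{\pm k}$, $\mu(q)^n$ appearing throughout Section \ref{s2}, so that the final answer collapses to the stated closed form without stray constants. A secondary technical point is verifying that the binomial/multinomial coefficients genuinely reduce mod $p$ to $\alpha_{k-1}!/\prod_{0\leqslant i<k}(\alpha_i - \alpha_{i-1})!$ via Lucas's theorem, and that all the other $(S,R)$ of the right bidegree — those with $S \neq \emptyset$ or with $R$ not of the simple form above — contribute zero when paired against $\tilde m_{S'}\tilde q_{{R'}^*}$ with $S' = \emptyset$. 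Once those vanishing and reduction lemmas are in hand, the three cases $r = p^k$, $r < p^k$ with $\alpha_i \geqslant \alpha_{i-1}$, and "otherwise" fall out immediately.
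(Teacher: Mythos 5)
Your framework --- Corollary \ref{hq23} with $\delta=0$ and $S'=\emptyset$ --- is the right one, but you miss the single move that makes it effective. In the duality the integer $n$ is the \emph{length of the operation} $R'$, not the rank of the group carrying $V_{k+1}$; since $P^r=St^{\emptyset,(r)}$, the correct specialization is $n=1$, $R'=(r)$, not $R'=(r,0,\ldots,0)$ of length $n$. With $n=1$ the coefficients are pairings against $St^{S,R}(Q_{1,1})=St^{S,R}(1)$ and $St^{S,R}(Q_{1,0})=St^{S,R}(y^{p-1})$, and Lemma \ref{bd36} (with $\varepsilon=0$, $b=p-1$) computes these outright as $\binom{p-1}{R}\,y^{p-1+|R|}$, vanishing for $S\neq\emptyset$. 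As you set things up, with $n$ kept general, the coefficients involve $St^{S,R}(Q_{n,s})$ for arbitrary $n$ --- but determining those is precisely Theorem \ref{dl34}, so your reduction is circular. Your fallback via the Cartan formula applied to $V_{k+1}=(-1)^k\sum_s(-1)^sQ_{k,s}y^{p^s}$ has the same defect: the cross terms $P^aQ_{k,s}\cdot P^by^{p^s}$ again require $P^aQ_{k,s}$. (A Cartan--formula induction can be made to work --- that is essentially the H\uhorn ng--Minh route cited in the theorem --- but it needs a genuine simultaneous induction that you only assert. Also, Corollary \ref{hq22} runs in the wrong direction for this theorem; it is the tool for Theorems \ref{dl32} and \ref{dl34}.)

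The second gap is the uniqueness of the contributing $R$, which you flag but do not resolve. After the $n=1$ reduction, the pairing $\langle\gamma_{2p^k-2r}(\tilde\ell_1),\binom{p-1}{R}y^{p-1+|R|}\rangle$ is nonzero exactly when $|R|=(p-1)(p^k-1-r)$ and $\binom{p-1}{R}\neq0$. Dividing by $p-1$, the first condition reads $\sum_{j=0}^{k-1}p^j(r_{j+1}+\cdots+r_k)=p^k-1-r$, and since $\binom{p-1}{R}\neq0$ forces every tail sum $r_{j+1}+\cdots+r_k$ to lie in $[0,p-1]$, uniqueness of the $p$-adic expansion of $p^k-1-r$ pins down $r_{j+1}+\cdots+r_k=p-1-\alpha_j$, hence $R=(\alpha_1-\alpha_0,\ldots,\alpha_{k-1}-\alpha_{k-2},p-1-\alpha_{k-1})$, and the requirement $r_i\geqslant0$ is exactly the condition $\alpha_i\geqslant\alpha_{i-1}$. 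Without this argument the answer would a priori be a sum of several monomials $\tilde Q^{R^*_{2(p-1)}}$ rather than the single product in the statement; this, together with the reduction mod $p$ of $\binom{p-1}{R}$ to $(-1)^{\alpha_{k-1}}\alpha_{k-1}!/\prod(\alpha_i-\alpha_{i-1})!$, is the combinatorial core of the theorem, not mere sign bookkeeping.
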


\begin{thm}[H\uhorn ng-Minh \cite{hum}]\label{dl34} Set
	$c = \frac{(-1)^{\alpha  _{n-1}}\alpha_{n-1}!(\alpha
		_s+1)}{(\alpha  _s+1-\alpha  _{s-1})!\prod_{s\ne i <n} (\alpha  _i-\alpha  _{i-1})!}.$ Then
\begin{align*}P^rQ_{n,s} = \begin{cases} Q_{n,s}^p, &r = p^n-p^s,\\
	\displaystyle{cQ_{n,s} \prod_{0 \leqslant i <n}
	Q_{n,i}^{\alpha_i-\alpha_{i-1}}},  &r < p^n-p^s,   \alpha_i\geqslant  \alpha
	_{i-1},\\
	& s\ne  i <n,\ \alpha_s+1\geqslant \alpha_{s-1},\\
	0\ , &\text{otherwise.}\end{cases} 
\end{align*}
\end{thm}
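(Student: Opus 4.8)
The plan is to deduce Theorem \ref{dl34} from the duality apparatus of Section \ref{s3}, exactly as Theorem \ref{dl32} is obtained but with $\delta = 0$. First I would specialize the setup: take $R = (r, 0, \ldots, 0)$ so that $St^{S,R} = P^r$ with $\ell(R) = k$, take $S = \emptyset$ (so $t = 0$), and set $\delta = 0$; then $St^{S,R}(\tilde M_{n,s}^\delta Q_{n,s}^{1-\delta}) = P^r Q_{n,s}$, which by Corollary \ref{hq22} expands over the monomial basis $\tilde M_{S'}\tilde Q^{{R'}^*_{2p^k - t'}}$ of $E(\tilde M_{n,0},\ldots)\otimes P(\tilde L_n, Q_{n,1},\ldots,Q_{n,n-1})$ with coefficients $C_{S',R'} = \langle \tilde m_S \tilde q_{R^*_{2p^n + [-2p^s] - t}} \otimes \gamma_{p^s}(v_{k+1}),\, St^{S',R'} V_{k+1}\rangle$. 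So the computation reduces to knowing $St^{S',R'} V_{k+1}$ well enough to evaluate these pairings, and then reading off which $(S', R')$ contribute.

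The key input is Theorem \ref{dl33}, which gives $P^{r'} V_{k+1}$ explicitly; more generally one needs the full $St^{S',R'} V_{k+1}$, but the pairing $C_{S',R'}$ forces a dimension constraint and an exterior-degree constraint that collapses the sum. Concretely, pairing against $\tilde q_{R^*_{2p^n+[-2p^s]-t}} \otimes \gamma_{p^s}(v_{k+1})$ (with $t = 0$, no $\tilde m_S$ factor since $S = \emptyset$) requires that $St^{S',R'} V_{k+1}$, after extracting the $V_{k+1}^p = (V_{k+1})^{\,p}$-type or $\gamma_{p^s}$-dual component, lands in the divided-power part indexed by $R^*$; since $\dim V_{k+1} = 2p^k$ and we are pairing in degree matching $\dim Q_{n,s} + 2r(p^k - \text{stuff})$, the surviving terms are precisely the $St^{S',R'}$ whose dual monomial $\tau_{S'}\xi^{R'}$ has the right bidegree. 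I expect that only $S' = \emptyset$ survives (the exterior variables $\tilde m_{S'}$ must be absorbed), so that $C_{S',R'}$ reduces to coefficients of $P^{r'} V_{k+1}$ read from Theorem \ref{dl33}, and the $p$-adic-digit bookkeeping there produces the multinomial coefficient $c$ and the product $\prod Q_{n,i}^{\alpha_i - \alpha_{i-1}}$ with the shifted digit $\alpha_s + 1$ at position $s$ coming from the $\gamma_{p^s}(v_{k+1})$ duality (the $\gamma_{1/p}(v_{k+1}) = u_{k+1}$ convention handling $s = -1$, i.e. the $\tilde L_n$ case, uniformly).

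After assembling $C_{S',R'}$, I would sort the resulting sum according to the $p$-adic carries: the exceptional case $r = p^n - p^s$ gives $P^r Q_{n,s} = Q_{n,s}^p$ by the Cartan/Frobenius behavior (this is the top power and can be checked directly, or falls out as the unique term where all digits $t_i$ equal $p-1$ or the pairing degenerates), the generic case $r < p^n - p^s$ with all $\alpha_i \geqslant \alpha_{i-1}$ (and $\alpha_s + 1 \geqslant \alpha_{s-1}$) yields the displayed formula with constant $c = \frac{(-1)^{\alpha_{n-1}} \alpha_{n-1}! (\alpha_s + 1)}{(\alpha_s + 1 - \alpha_{s-1})! \prod_{s \ne i < n}(\alpha_i - \alpha_{i-1})!}$, and all other cases give $0$ because some multinomial coefficient in the expansion of $P^{r'} V_{k+1}$ vanishes (a digit difference $\alpha_i - \alpha_{i-1}$ is negative, so $(\alpha_i - \alpha_{i-1})!$ is not defined / the term is absent). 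The sign $\sigma = r(S,R) + r(S',R') + s + \delta + (t + [-2p^s])t' + nhk\delta$ with $\delta = 0$, $t = 0$, $S' = \emptyset$ (so $t' = 0$) collapses to $\sigma = r(S',R') + s$, and one checks this matches the sign absorbed into $(-1)^{\alpha_{n-1}}$ and $\alpha_{n-1}!$.

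The main obstacle I anticipate is the last step: justifying that only $S' = \emptyset$ contributes to $C_{S',R'}$ and that the pairing against $\gamma_{p^s}(v_{k+1})$ extracts exactly the coefficient attached to $Q_{n,s}$ with digit shifted by one at slot $s$ — this requires knowing the $St^{S',R'}$-action on $V_{k+1}$ beyond just $P^{r'}$, or else a clean argument (presumably via the coaction / Milnor-basis duality and the fact that $V_{k+1}$ is the image under $d_k^* P_k$ of the one-dimensional class $y$, so its $\mathcal A$-module structure is transparent) that the extra Milnor operations $\tau_{s'}$ cannot appear because $V_{k+1}$ and its relevant powers carry no odd-dimensional exterior generators in the needed bidegree. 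Once that vanishing is in hand, the rest is the multinomial-coefficient arithmetic already packaged in Theorem \ref{dl33}, reindexed by the single carry at position $s$.
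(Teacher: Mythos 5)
Your skeleton is the paper's: Theorem \ref{dl34} is indeed proved by rerunning the argument of Theorem \ref{dl32} with $\delta=0$, i.e.\ by applying Corollary \ref{hq22} with $k=1$, $S=\emptyset$, $R=(r)$, so that $P^rQ_{n,s}=\sum_{S',R'}(-1)^{\sigma}C_{S',R'}\,\tilde M_{S'}\tilde Q^{{R'}^*_{2p-t'}}$ with $C_{S',R'}=\langle \tilde q_{(r)^*_{2p^n-2p^s}}\otimes\gamma_{p^s}(v_2),\,St^{S',R'}V_2\rangle$. But there is a genuine gap exactly at the step you flag as your main obstacle: the key input is \emph{not} Theorem \ref{dl33}. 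In Corollary \ref{hq22} the sum runs over $(S',R')$ with $\ell(R')=n$, and the monomial $\tilde Q^{{R'}^*_{2p-t'}}$ attached to $R'$ is the output monomial; since the answer $cQ_{n,s}\prod_{i<n}Q_{n,i}^{\alpha_i-\alpha_{i-1}}$ generically involves all of $Q_{n,0},\dots,Q_{n,n-1}$, the contributing $R'$ is a full length-$n$ sequence with many nonzero entries. So one must know $St^{\emptyset,R'}V_2$ for arbitrary such $R'$ --- strictly more data than $P^{r'}V_{k+1}=St^{\emptyset,(r')}V_{k+1}$ --- and no dimension or exterior-degree constraint collapses these general Milnor operations back to the single-entry case. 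This is precisely why the paper devotes its final (unnumbered) Proposition to computing $St^{R}V_2$ for all $R=(r_1,\dots,r_n)$, by expanding $d_n^*P_nV_2$ from $V_2=y_2^p-y_2y_1^{p-1}$ via Lemma \ref{bd36} and Lemma \ref{bd38} and reading off coefficients with Theorem \ref{dl12}. Without that computation your pairings $C_{S',R'}$ cannot be evaluated, and the multinomial bookkeeping that produces $c$ (in particular the factor $\alpha_s+1$, which arises from the sum $r_{s+1}+\cdots+r_n$ appearing in $St^RV_2$) is not contained in Theorem \ref{dl33}.

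The easy half of your obstacle does go through as you guess: since $V_2$ is a polynomial in $y_1,y_2$ alone, $d_n^*P_nV_2$ has no $\tilde M_{n,u}$ components, so $St^{S',R'}V_2=0$ for $S'\neq\emptyset$ by Theorem \ref{dl12}; and the case $r=p^n-p^s$ is the unstable top power, giving $Q_{n,s}^p$. But as written the proposal substitutes the wrong lemma for the essential computation, so it does not close.
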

To prove these theorems we need

\begin{nota}\label{kh35} Let $R = (r_1, \ldots , r_n)$ be a sequence of arbitrary integers and $b \geqslant 0$.
Denote by $|R| = \sum_{i=1}^n(p^i-1)r_i$ and $\binom bR$ the coefficient of $y_1^{r_1}\ldots y_n^{r_n}$ in $(1+y_1+\ldots y_n)^b$. That means,
$$\binom bR = \begin{cases} \dfrac{b!}{(b-r_1-\ldots-r_n)!r_1!\ldots r_n!},
&r_1 + \ldots + r_n \leqslant b,\\
0,   &r_1 + \ldots + r_n > b.
\end{cases}$$
\end{nota}

The proofs of Theorems \ref{dl31} and \ref{dl33} are based on the duality theorem and the following
\begin{lem}\label{bd36} Let $b$ be a non-negative integer and $\varepsilon = 0,\, 1$. We then have
	\begin{align*} 
St^{S,R}(x^\varepsilon y^b) = \begin{cases} 
\displaystyle{\binom bR x^\varepsilon y^{b + |R|} ,} &S = \emptyset ,\\
\displaystyle{\varepsilon \binom bR y^{b +  |R| + p^u}} , &S = (u), \ u \geqslant 0 ,\\ 
	0\ , &\text{otherwise.} \end{cases}
\end{align*}
Here $x$ and $y$ are the generators of $H^*(B\mathbb Z/p) = E(x)\otimes P(y)$.
\end{lem}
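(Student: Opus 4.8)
The plan is to compute $St^{S,R}(x^\varepsilon y^b)$ by exploiting the characterization of $St^{S,R}$ as the dual basis element to $\tau_S\xi^R$ in the Milnor basis, together with the known coaction of $\mathcal A_*$ on $H^*(B\mathbb Z/p)$. Recall that the dual of the $\mathcal A$-module $H^*(B\mathbb Z/p)$ carries a comodule structure $\psi : H^*(B\mathbb Z/p) \to \mathcal A_* \otimes H^*(B\mathbb Z/p)$, and the pairing $\langle St^{S,R}, \tau_S\xi^R\rangle = 1$ means that for any class $z$, the coefficient of $St^{S,R} z$ is read off from the $\tau_S\xi^R$-component of $\psi(z)$. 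The classical formula (Milnor) for the coaction on the generators is $\psi(x) = \sum_{i\geqslant 0} \tau_i \otimes y^{p^i} \,+\, 1\otimes x$ and $\psi(y) = \sum_{i\geqslant 0} \xi_i \otimes y^{p^i}$ (with $\xi_0 = 1$). So first I would record these two formulas, being careful about the grading and signs in the exterior part.

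Next I would compute $\psi(y^b)$. Since $\psi$ is a ring map and the $\xi_i$ commute, $\psi(y^b) = \big(\sum_{i\geqslant 0}\xi_i\otimes y^{p^i}\big)^b = \sum_{R}\binom{b}{R}\xi^R \otimes y^{b+|R|}$, where the multinomial coefficient $\binom bR$ is exactly Notation \ref{kh35}, and the exponent bookkeeping gives $|R| = \sum_i (p^i - 1)r_i$ as in that notation (the $\xi_0$-slot contributes the "leftover" $b - \sum_{i\geqslant 1} r_i$ factors of $1\otimes y$). Since no $\tau$'s appear, extracting the coefficient of $\tau_S\xi^R$ forces $S = \emptyset$, and we get $St^{\emptyset,R}(y^b) = \binom bR y^{b+|R|}$ and $St^{S,R}(y^b) = 0$ for $S\neq\emptyset$, which is the $\varepsilon = 0$ case. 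For $\varepsilon = 1$, I would write $\psi(xy^b) = \psi(x)\psi(y^b) = \big(1\otimes x + \sum_{u\geqslant 0}\tau_u\otimes y^{p^u}\big)\cdot\sum_R \binom bR\xi^R\otimes y^{b+|R|}$. Multiplying out: the $1\otimes x$ term contributes $\sum_R \binom bR \xi^R \otimes xy^{b+|R|}$ (the $S=\emptyset$ part of the answer, with the $x$ surviving), while the $\tau_u\otimes y^{p^u}$ term contributes $\sum_R \binom bR \tau_u\xi^R \otimes y^{b+|R|+p^u}$. Reading off the coefficient of $\tau_S\xi^R$: if $S = \emptyset$ we get $\binom bR xy^{b+|R|}$; if $S = (u)$ a single index we get $\binom bR y^{b+|R|+p^u}$; and if $\ell(S)\geqslant 2$, since $\tau_u^2 = 0$ and only one $\tau$ ever appears, the coefficient is $0$. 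This reproduces all three cases of the lemma.

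The only subtlety — and the step I expect to be the main (minor) obstacle — is keeping track of signs: the $\tau_i$ are odd-degree, so in $\psi(x)\psi(y^b)$ and in the comparison with the dual pairing there are potential Koszul signs, and one must check they all come out $+1$ here. Since $y^b$ and $y^{b+|R|}$ are even-degree classes, commuting $\tau_u$ past them is sign-free, and the single $\tau_u$ on the left of $\xi^R$ matches the ordering convention $\tau_S\xi^R$ in the stated Milnor basis (with $S$ a one-element sequence, there is no internal reordering of $\tau$'s to worry about). So no signs actually intervene, but I would spell this out. The rest is the routine multinomial identity $\big(\sum_i \xi_i\otimes y^{p^i}\big)^b = \sum_R\binom bR\xi^R\otimes y^{b+|R|}$, which is immediate from the multinomial theorem once one notes that $\xi_0 = 1$ contributes the slack variable. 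Alternatively, if one prefers to avoid the coaction language, the same computation can be phrased inductively via the Cartan-type formula for $St^{S,R}$ and the values $St^{\emptyset,(r)} = P^r$, $St^{(0),\emptyset} = \beta$, using $P^r y^b = \binom br y^{b+r(p-1)}$ (the classical formula for $\mathbb Z/p$) as the base case; but the comodule approach is cleanest and I would present that.
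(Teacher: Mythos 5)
Your proposal is correct, and at bottom it is the same computation as the paper's: multiplicativity plus the known behaviour of a ``total operation'' on the two generators $x$ and $y$, followed by reading off the coefficient indexed by $(S,R)$. The difference is the formalism in which that total operation is packaged. You use Milnor's coaction $\psi$ on $H^*(B\mathbb Z/p)$ and extract the coefficient of $\tau_S\xi^R$ directly; the paper instead computes $d_m^*P_m(x^\varepsilon y^b)=(d_m^*P_m x)^\varepsilon(d_m^*P_m y)^b$ (up to sign) from Proposition~\ref{md11} and then matches the result against the universal expansion of Theorem~\ref{dl12}, which is precisely the modular-invariant incarnation of the coaction (with $\tilde M_{m,u}$ playing the role of $\tau_u$ and the $\tilde Q^{R^*}$ that of $\xi^R$). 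What your route buys is cleanliness: since $y$ is even-dimensional, all Koszul signs are visibly trivial and you avoid the $\mu(2b+\varepsilon)^m$ and $(-1)^{r(S,R)}$ bookkeeping that constitutes the actual labour in the paper's version. What it costs is an external input: the coaction formulas $\psi(x)=1\otimes x+\sum\tau_i\otimes y^{p^i}$, $\psi(y)=\sum\xi_i\otimes y^{p^i}$ are classical (they are in Milnor's paper) but are nowhere stated or proved in this article, whereas the paper's proof stays entirely inside the $d_n^*P_n$ framework it has already set up and will keep using. If you wanted to stay internal to the paper, you would either quote those coaction formulas with a reference or observe that they are equivalent to Proposition~\ref{md11}(iii) via Theorem~\ref{dl12} applied to $z=x$ and $z=y$ --- at which point your argument and the paper's coincide line for line.
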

\begin{proof} A direct computation using Proposition \ref{md11} shows that
\begin{align*} d_m^*P_m(x^\varepsilon y^b) &= (-1)^{mb}(h!)^{m
	\varepsilon}\Big(\tilde L_m^\varepsilon
x^\varepsilon  + \varepsilon \sum_{u=0}^{m-1}
(-1)^{u+1}\tilde M_{m,u}y^{p^u}\Big)\\
&\hskip 3 cm   \times\Big(\sum_{R=(r_1,\ldots,
	r_m)}(-1)^{r(\emptyset ,R)}\binom bR 
\tilde Q^{R^*_{2b}}y^{b + |R|}\Big)\\
&= \mu(2b+\varepsilon)^m\Big(\sum_{R=(r_1,\ldots,r_m)}(-1)^{r(\emptyset ,R)}
\binom bR\tilde Q^{R^*_{2b+\varepsilon}}x^\varepsilon y^{b + |R|}\\ 
&\hskip 1 cm + \varepsilon\sum_{u=0}^{m-1}\sum_{R=(r_1,\ldots,r_m)}
(-1)^{r((u),R)}\binom bR\tilde M_{m,u}
\tilde Q^{R^*_{2b}}y^{b+|R| + p^u}\Big). 
\end{align*} 
The lemma now follows from Theorem \ref{dl12}.
\end{proof}

\begin{proof}[Proof of Theorem \ref{dl31}] Since $\dim U_{k+1} = p^k$, it is clear that$P^rU_{k+1} = 0$ for $ 2r > p^k$.
Suppose $r \leqslant (p^k-1)/2$. Applying Corollary \ref{hq23} with $\delta = n = 1$ and using Lemma \ref{bd36} we obtain
\begin{align*} P^rU_{k+1}   &=   \sum_{R   =   (r_1,\ldots,
		r_k)}(-1)^{r(\emptyset,R)+r +hk} 
	\langle\tilde   q_{(r)^*_{p^k}},St^{\emptyset,R}\tilde   L_1\rangle
	U_{k+1}\tilde Q^{R^*_{p-1}}\\ 
	&\quad + \sum_{u=0}^{k-1}\sum_R(-1)^{r((u),R)+r+kh+1}
	\langle\tilde q_{(r)^*_{p^k}},St^{(u),R}\tilde M_{1,0}
	\rangle V_{k+1}\tilde M_{k,u}\tilde Q^{R^*_{p-3}}\ .
	\end{align*} 
Set  $\bar r_i = \alpha_i-\alpha_{i-1},\ i < k,\ \bar r_k
= h - \alpha_k,\ \bar R_0 = (\bar r_1,\ldots,\bar r_k),
\bar  R_u = (\bar r_1,\ldots,\bar r_u-1,\ldots,\bar r_k),\ 1\leqslant u \leqslant k.$ Computing directly from Lemma \ref{bd36} with $\varepsilon = 0,\  b = h$ or $\varepsilon = 1,\ b = h - 1$  gives
\begin{align*}   \langle\tilde   q_{(r)^*_{p^k}},   St^{\emptyset,
	R}\tilde L_1\rangle &= \begin{cases}  \dfrac{h!}{\bar r_0\ldots\bar r_k},\quad  &R
= \bar R_0 \\ 
0\ , &\text{otherwise.}\end{cases}\\
\langle\tilde q_{(r)^*_{p^k}},St^{(u),R}\tilde M_{1,0}\rangle &=
\begin{cases}  \dfrac{(h-1)!\bar r_u}{\bar r_0\ldots\bar r_k},\quad &R  = \bar R_u\\
0\ , &\text{otherwise.}\end{cases} 
\end{align*}
A simple computation shows that
$$r( \emptyset,\bar   R_0)  +  r  =  r((u),\bar  R_u)  +  r  +  1  =  hk\
(\text{mod}\ 2).$$ 
Hence, the theorem is proved.
\end{proof}

\begin{proof}[Proof of Theorem \ref{dl33}] Since $\dim V_{k+1} = 2p^k$, we have only to prove the theorem for $r < p^k$. Note that $Q_{1,1} = 1$. Hence
	$$ St^{S,R}Q_{1,1} = \begin{cases}  1\ , \quad &S = \emptyset,\, R = (0,\ldots,0),\\
	0,\  & \mbox{otherwise}.
	\end{cases}$$
So, $\langle  \tilde q_{(r)^*_{2p^k}}, St^{S,R}Q_{1,1}\rangle = 0$  for any $S,\, R$. Remember that $Q_{1,0} = y^{p-1}$. So, applying Corollary \ref{hq23} with $\delta = 0,\, n = 1$ and using Lemma \ref{bd36} with $\varepsilon  = 0,\, b = p - 1$, we get
\begin{align*} P^rV_{k+1} = \sum_{R} (-1)^{r(\emptyset ,R) + r}\langle \tilde q_{(r)^*_{2p^k}},   St^{\emptyset,R}Q_{1,0}\rangle  \tilde Q^{R^*_{2(p-1)}}V_{k+1}.\tag a
\end{align*}
From Lemma \ref{bd36}, we see that it implies
\begin{align*} &\langle \tilde q_{(r)^*_{2p^k}},St^{\emptyset,R}Q_{1,0}\rangle\tag b\\
&\qquad = \begin{cases} \displaystyle{\binom{p-1}R} ,  &R = (\alpha_1 - \alpha_0,\ldots,
\alpha_{k-1}-\alpha_{k-2},p-1-\alpha_{k-1}),\\
0\ , &\text{otherwise.}\end{cases} 
\end{align*}

Suppose that $R = (\alpha_1 - \alpha_0,\ldots,\alpha_{k-1}-\alpha_{k-2},p-1-\alpha_{k-1}).$ Then we can easily observe that
\begin{align*} &r(\emptyset ,R) + r = 0\ (\text{mod}\ 2),\tag c\\
	&\binom{p-1}R = \frac{(-1)^{\alpha_{k-1}}\alpha_{k-1}!}
	{\prod_{0\leqslant i < k}(\alpha_i-\alpha_{i-1})},\\
	&R^*_{2(p-1)} = (2\alpha_0,\alpha_1-\alpha_0,\ldots,
	\alpha_{k-1}-\alpha_{k-2}). 
\end{align*}
Theorem \ref{dl33} now follows from (a),(b) and (c).
\end{proof}

Following Corollary \ref{hq22},to determine $P^rM_{n,s}$ and $P^rQ_{n,s}$ we need to compute the action of $St^{S,R}$ on $U_2$ and $V_2$.
\begin{prop}\label{md37} Suppose given $R = (r_1,\ldots,r_n)$, and $0\leqslant u < n$. Set $w_{s} = r_{s+1} + \ldots + r_n$, for $ s \geqslant 0$.  Then we have
$$ St^{S,R}U_2 = \begin{cases} \displaystyle{\binom hR\big(\tilde L_1^{\vert
	R\vert /h}U_2 + \overset{n-1}{\underset{s=0}\sum}
h^{-1}w_{0}\tilde M_{1,0}\tilde     L_1^{(\vert R\vert -
	p^{s+1}+1)/h}V_2^{p^s}\big),\quad S = \emptyset},\\
\displaystyle{\binom hR\overset{n-1}{\underset{s=u}\sum}
h^{-1} w_{s}\tilde  L_1^{(\vert R\vert  - p^{s+1}+p^u+h)/h}V_2^{p^s},
\hskip2.6cm S = (u)},\\
0\ ,\hskip 6cm \text{otherwise.}\end{cases}$$
Here, $|R|$ and $\binom hR$ are defined in Notation \ref{kh35}.
\end{prop}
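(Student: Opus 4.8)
The plan is to reduce the computation of $St^{S,R}U_2$ and $St^{S,R}V_2$ to the already-established Lemma \ref{bd36} by exploiting the multiplicativity of $d_n^*P_n$ (Proposition \ref{md11}(i)) and its factorization property (Proposition \ref{md11}(ii)). Concretely, since $U_2 = U_2(x_1,x,y_1,y) = (-h!)d_1^*P_1x$ by Proposition \ref{md11}(iii), I would apply $d_n^*P_n$ to the class $x \in H^1(B\mathbb Z/p)$ in two different ways. On one hand, $d_{n+1}^*P_{n+1}x = (-h!)^{n+1}U_{n+2}$, which by Theorem \ref{dl12} expands in terms of $St^{S,R}x$; but since $x$ has dimension $1$, only $S = \emptyset$ with $R$ of total weight bounded, and $S = (u)$ with weight-one $\xi$-part, actually contribute — in fact $St^{\emptyset,R}x$ and $St^{(u),R}x$ are exactly what Lemma \ref{bd36} computes with $\varepsilon = 1$, $b = 0$. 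On the other hand, by Proposition \ref{md11}(ii), $d_{n+1}^*P_{n+1}x = d_1^*P_1 d_n^*P_n x = d_1^*P_1\big((-h!)^n U_{n+1}\big)$, and using the cup-product formula in Proposition \ref{md11}(iii) together with multiplicativity, I can re-express this as a sum over $s$ of $(d_1^*P_1 \tilde M_{n,s}) V_2^{p^s}$ (with the conventions $y^{1/p}=x$, $V_2^{1/p}=U_2$), paralleling step (b) in the proof of Theorem \ref{dl21}.

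The heart of the argument is then to match these two expansions. Applying Theorem \ref{dl12} to $\tilde M_{n,s}$ (a class of dimension $p^n + [-2p^s]$, so dual pairings pick out the coefficient polynomials $\tilde M_S\tilde Q^{\ast}$), and comparing coefficients of the monomials in $x_1,\ldots,x_n$ and the powers $V_2^{p^s}$, I expect the coefficient of $St^{S,R}U_2$ to be read off directly from $\binom hR$ and from combinatorial identities relating $r(\emptyset,R)$ or $r((u),R)$ to the partial sums $w_s = r_{s+1}+\ldots+r_n$. The appearance of the factor $h^{-1}w_s$ and the shifted exponents $(|R| - p^{s+1} + p^u + h)/h$ on $\tilde L_1$ should emerge from tracking the dimension bookkeeping: $\dim \tilde L_1 = p$, $\dim \tilde M_{1,0} = p - 2$, $\dim V_2 = 2p$, and the total dimension of $St^{S,R}U_2$ is $p^k$ shifted by $|R|$-type contributions, which forces these specific $p$-adic powers. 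An alternative and probably cleaner route is to invoke Corollary \ref{hq22} (the dual formula for $St^{S,R}$ on $U_{k+1}^\delta V_{k+1}^{1-\delta}$) with $k = 1$, reducing everything to pairings $\langle \tilde m_S \tilde q_*, St^{S',R'}(\tilde M_{n,s}^\delta Q_{n,s}^{1-\delta})\rangle$, but since at this stage we have not yet computed those, the direct $d_n^*P_n$ approach via Lemma \ref{bd36} is the self-contained one.

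The main obstacle I anticipate is the combinatorial identity that converts the sum over $(S,R)$ coming from Theorem \ref{dl12} applied to $\tilde M_{n,s}$ into the stated closed form — in particular, verifying that the coefficients $\langle \tilde q_*, St^{S,R}\tilde M_{n,s}\rangle$ collapse to give precisely $\binom hR$ times $h^{-1}w_s$, and that the sign $r(\emptyset,R) + r \equiv$ (something) $\pmod 2$ works out uniformly across all terms. This is the analogue of the identity $r(\emptyset,\bar R_0) + r = r((u),\bar R_u) + r + 1 = hk \pmod 2$ used in the proof of Theorem \ref{dl31}, but here it must hold for every $s$ simultaneously with the weights $w_s$ as multiplicities, so one must check that the exponent $(|R| - p^{s+1} + p^u + h)/h$ is always a non-negative integer under the constraints forced by $\binom hR \neq 0$ (i.e.\ $r_1 + \ldots + r_n \leqslant h$) — equivalently that $p^{s+1} - p^u \leqslant |R| + h$ whenever the coefficient is nonzero. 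I would handle this by the same elementary $p$-adic estimates used throughout Section \ref{s4}, isolating the range $u \leqslant s$ where the term survives. Once these sign and integrality checks are in place, the proposition follows by reading off coefficients and the proof is complete.
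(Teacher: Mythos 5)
There is a genuine gap: your ``self-contained'' route does not actually produce the coefficients in the statement, and in the paper's logical order it is circular. Computing $d_{n+1}^*P_{n+1}x$ in two ways via Proposition \ref{md11}(ii) is precisely the $k=1$ case of the duality theorem (Theorem \ref{dl21}): one factorization gives $d_n^*P_nU_2$, whose Theorem \ref{dl12}-expansion has the unknowns $St^{S,R}U_2$ ($\ell(R)=n$) as coefficients, and the other gives $\sum_s \pm(d_1^*P_1\tilde M_{n,s})V_2^{p^s}$, whose coefficients are $St^{S,R}\tilde M_{n,s}$ with $\ell(R)=1$. Matching the two therefore only trades the problem for knowledge of $P^r\tilde M_{n,s}$ and its $\beta$-companions --- which is Theorem \ref{dl32}, proved in the paper \emph{from} Proposition \ref{md37}. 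So this route has exactly the circularity you correctly diagnosed in the Corollary \ref{hq22} route, unless you independently compute $d_1^*P_1\tilde M_{n,s}$ as an explicit polynomial, which you neither do nor sketch and which is at least as hard as the target. The third expansion you lean on, $d_{n+1}^*P_{n+1}x=(-h!)^{n+1}U_{n+2}$ read through Lemma \ref{bd36} with $\varepsilon=1$, $b=0$, carries no new information: $\binom 0R$ kills every $R\neq 0$, so it merely reproduces Proposition \ref{md11}(iii) in rank $n+1$. The specific coefficients $\binom hR\,h^{-1}w_s$ and the exponents of $\tilde L_1$ cannot ``emerge from dimension bookkeeping''; dimensions determine the exponent of $\tilde L_1$ only after one already knows which monomials occur with which multiplicity.

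The paper's proof is a direct computation that bypasses duality entirely. It uses the product decomposition $U_2=M_{2,1}L_1^{h-1}$ with $M_{2,1}=x_1y_2-x_2y_1$ and $L_1=y_1$: multiplicativity of $d_n^*P_n$ (Proposition \ref{md11}(i)) together with the formulas for $d_n^*P_nx_i$ and $d_n^*P_ny_i$ turns $d_n^*P_nM_{2,1}$ into a sum of rank-$n$ invariants times the rank-two determinants $[1;v]$ and $[u,v]$, while $d_n^*P_nL_1^{h-1}=V_{n+1}^{h-1}$ expanded multinomially produces the factors $\binom{h-1}{R'}$; multiplying and collecting yields $h^{-1}\binom hR\sum_v r_v\,y_1^{|R|+h-p^v}[\,\cdot\,;v]$, and Lemma \ref{bd38} then converts $[1;v]$ and $[u,v]$ into the stated basis $U_2,V_2,\tilde L_1,\tilde M_{1,0}$, which is where the partial sums $w_s=r_{s+1}+\dots+r_n$ come from. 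None of these ingredients --- the factorization of $U_2$, the determinant identities of Lemma \ref{bd38}, or the multinomial bookkeeping that produces $\binom hR$ and $w_s$ --- appears in your sketch, so the proposal as written cannot be completed along the lines you describe.
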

The proposition will be proved by using Theorem \ref{dl12} and the following
\begin{lem}\label{bd38} Let $u, v$ be non-negative integers with $u \leqslant v$. We have
	
\medskip
{\rm i)} $[u,v] = \sum_{s=u}^{v-1}V_1^{p^v}-p^{s+1}+p^uV_2^{p^s}$,
	
{\rm ii)} $[1;v]  =   V_1^{p^v-h}U_2   +	M_{1,0}\sum_{s=0}^{v-1}V_1^{p^v-p^{s+1}}V_2^{p^s}. $

\medskip
Here $[u,v]$ and $[1;v]$ are defined in Section \ref{s2}. 
\end{lem}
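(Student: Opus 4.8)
\textbf{Proof proposal for Lemma \ref{bd38}.}

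The plan is to establish both identities by direct expansion of the defining determinants from Section \ref{s2}, specialized to the $2$-variable setting, and then to recognize the resulting expressions in terms of $V_1, V_2, U_2, M_{1,0}$. For part (i), recall that $[u,v] = \det(y_i^{p^{e_j}})$ with $(e_1,e_2) = (u,v)$ in two variables, so $[u,v] = y_1^{p^u}y_2^{p^v} - y_1^{p^v}y_2^{p^u}$. I would factor out $y_1^{p^u}y_2^{p^u}$ and use that $V_1 = L_1 = [0] = y_1$ (so $V_1 = y_1$ after the relevant normalization; more precisely one works with the homogeneous pieces $L_1, L_2$ and the ratio $V_2 = L_2/L_1$). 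The cleanest route is to note that $[u,v]$ is, up to a power of $V_1$, the image under an iterated Frobenius-type substitution $y \mapsto y^{p^s}$ applied to $[0,1] = L_2$, and that $L_2 = L_1 V_2 = V_1 V_2$; iterating the factorization $[u,v] = [u,v-1]^p \cdot(\text{correction}) + \cdots$ telescopes into the stated sum $\sum_{s=u}^{v-1} V_1^{p^v - p^{s+1} + p^u} V_2^{p^s}$. Concretely, I expect to prove it by induction on $v - u$: the base case $v = u+1$ gives $[u,u+1] = V_1^{p^u} V_2^{p^u}$, which matches, and the inductive step uses the cofactor expansion of the $(v-u+1)$-term telescoping relation together with the identity $V_2^{p^s}$ being the $s$-fold Frobenius twist of $V_2$.

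For part (ii), the determinant $[1;v]$ in two variables is $\begin{vmatrix} x_1 & x_2 \\ y_1^{p^v} & y_2^{p^v}\end{vmatrix} = x_1 y_2^{p^v} - x_2 y_1^{p^v}$. I would again proceed by induction on $v$, using the recursive structure: expanding along the bottom row relates $[1;v]$ to $[1;v-1]$ and introduces a term involving $[0,v] = [v]$-type minors, which by part (i) contributes the $V_2^{p^s}$ pieces. The base case $v = 0$ reads $[1;0] = x_1 y_2 - x_2 y_1$; since $M_{1,0} = [1;\,] $ has dimension considerations forcing it to equal (up to normalization) this $2\times 2$ determinant divided by the appropriate power of $V_1$, and $U_2 = M_{2,1} L_1^{h-1}$ unwinds in dimension $1$ to the stated relation, the $v = 0$ case should reduce to the definitions $U_2 = $ the normalized $[1;0]$ and $M_{1,0}$ the other normalized Mùi invariant, with the empty sum vanishing. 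The inductive step then reads $[1;v] = [1;v-1]^p + M_{1,0} \cdot (\text{a minor of type } [u,v])$, and substituting the already-proven formula for part (i) into that minor produces exactly $M_{1,0} \sum_{s=0}^{v-1} V_1^{p^v - p^{s+1}} V_2^{p^s}$ while the $[1;v-1]^p$ term supplies $V_1^{p^v - h} U_2$ after keeping track of the $V_1^{p-1} = V_1^{2h}$ bookkeeping coming from $h = (p-1)/2$.

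The main obstacle I anticipate is bookkeeping the powers of $V_1$ correctly: the invariants $U_2, V_2, M_{1,0}$ are defined as ratios $L_k/L_{k-1}$ and products with $L_{k-1}^{h-1}$, so every determinant identity must be divided through by the right power of $L_1 = V_1$, and the exponents $p^v - p^{s+1} + p^u + h$ appearing in Proposition \ref{md37} must be matched against what the raw determinant expansion produces before normalization. A secondary subtlety is the Frobenius compatibility $[u,v] \mapsto$ its twist: one must check that raising to the $p$-th power commutes with the determinant expansion in the expected way modulo $p$, which is where the characteristic-$p$ Freshman's-dream identity $(a+b)^p = a^p + b^p$ is used repeatedly. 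Once these normalizations are pinned down, both identities follow by the straightforward inductions sketched above, and Proposition \ref{md37} then follows by applying $St^{S,R}$ term-by-term via Lemma \ref{bd36} and Theorem \ref{dl12} to the expansions in (i) and (ii).
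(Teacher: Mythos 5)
The paper offers no written proof here (it says only ``The proof is straightforward''), and the intended argument is the direct one you gesture at: expand everything in $x_1,x_2,y_1,y_2$ and watch the sum telescope. Indeed, since $V_1=y_1$, $V_2=y_2^p-y_2y_1^{p-1}$, $M_{1,0}=x_1$ and $U_2=(x_1y_2-x_2y_1)y_1^{h-1}$, one has $V_1^{p^v-p^{s+1}+p^u}V_2^{p^s}=y_1^{p^v-p^{s+1}+p^u}y_2^{p^{s+1}}-y_1^{p^v-p^s+p^u}y_2^{p^s}$, and summing over $u\leqslant s\leqslant v-1$ telescopes to $y_1^{p^u}y_2^{p^v}-y_1^{p^v}y_2^{p^u}=[u,v]$; part (ii) is identical, with $V_1^{p^v-h}U_2=y_1^{p^v-1}(x_1y_2-x_2y_1)$ absorbing the boundary term of the telescope. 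Your base cases and the telescoping picture are correct, so the overall route is fine.

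However, the specific recursions you propose for the inductive steps are wrong, and the one for (ii) would actually fail. Raising to the $p$-th power is not the right operation: $[u,v-1]^p=y_1^{p^{u+1}}y_2^{p^v}-y_1^{p^v}y_2^{p^{u+1}}=[u+1,v]$, which shifts the lower index as well as the upper one; and $[1;v-1]=x_1y_2^{p^{v-1}}-x_2y_1^{p^{v-1}}$ has exterior degree $1$, hence squares to zero, so $[1;v-1]^p=0$ for every odd $p$ and your step ``$[1;v]=[1;v-1]^p+M_{1,0}\cdot(\text{minor})$'' loses the $U_2$ term entirely. The correct recursions multiply by a power of $V_1$ rather than applying Frobenius:
\begin{align*}
[u,v] &= V_1^{p^v-p^{v-1}}[u,v-1] + V_1^{p^u}V_2^{p^{v-1}},\\
[1;v] &= V_1^{p^v-p^{v-1}}[1;v-1] + M_{1,0}V_2^{p^{v-1}},
\end{align*}
both verified by a one-line expansion, and these make your inductions go through with the stated exponents. (Alternatively, skip the induction altogether and use the direct telescoping above.) Note also that the displayed formula in (i) contains a typesetting error in the source; the exponent is $p^v-p^{s+1}+p^u$, as you correctly inferred.
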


The proof is straightforward.

\begin{proof}[Proof of Proposition \ref{md37}] Recall that $ M_{2,1} =
x_1y_2 - x_2y_1$. From Proposition \ref{md11} we directly obtain
$$d_n^*P_nM_{2,1} = (-h!)^n \sum_{v=0}^n(-1)^v\tilde L_nQ_{n,v}[1;v] +
\sum_{\scriptstyle 0\leqslant u < n\atop\scriptstyle 0\leqslant v\leqslant n}
(-1)^{u+v+1}\tilde M_{n,u}Q_{n,v}[u,v].$$
Since $L_1 = y_1$ and $2(h - 1) = p - 3$, using Proposition \ref{md11}(iii) with $y = y_1$ and Notation \ref{kh35} we get
$$d_n^*P_nL_1^{h-1} = (-1)^{n(h-1)}\sum_{R'}(-1)^{r(\emptyset,R')}
\binom{h-1}{R'}\tilde Q^{{R'}^*_{p-3}}y_1^{\vert R'\vert +h-1}.$$
We have $U_2 = M_{2,1}L_1^{h-1},\ \dim U_2 = p$ and $\mu(p) = (-1)^hh!$. So, it implies from the above equalities and Proposition \ref{md11} that
\begin{align*} d_n^*P_n   U_2  &=  \mu(p)^n\Big(\sum_R(-1)^{r(\emptyset,R)}\tilde
Q^{R^*_p}\binom  hR \sum_{v=0}^nh^{-1}r_vy_1^{\vert  R\vert +  h - p^v}[1 ;
v]\\   &+\sum_{u=0}^{   n-1}   \sum_R(-1)^{r((u),R)}\tilde   M_{n,u}\tilde
Q^{R^*_{p-1}}\binom hR \sum_{v=u}^nh^{-1}r_vy_1^{\vert R\vert +  h - p^v}[u; v]\Big). \end{align*} 
Then by Theorem \ref{dl12} we have
$$ St^{S,R}U_2 = \begin{cases}\displaystyle{ h^{-1}\binom hR\sum_{v=0}^nr_vy_1^{\vert
	R\vert + h  - p^v}[1 ; v]},\ &S = \emptyset,\\
\displaystyle{h^{-1} \binom hR\sum_{v=u}^nr_vy_1^{\vert R\vert + h  - p^v}[u ;  v]},\ &S =
(u),\ u < n,\\ 
0\ , &\text{otherwise.}\end{cases}$$
Now the proposition follows from Lemma \ref{bd38}.
\end{proof}

\begin{proof}[Proof of Theorem \ref{dl32}] For simplicity, we assume that $0 \leqslant s < n$. Applying Corollary \ref{hq22} with $\delta = k = 1$ and using Proposition \ref{md37} we get
\begin{align*}P^r\tilde  M_{n,s}  =  \sum_{\scriptstyle  0\leqslant  u\leqslant s\atop\scriptstyle R = (r_1,\ldots,r_n)} 
(-1)^{r((u),R)+r+s+1+nh}C_{(u),R}\tilde  M_{n,u}\tilde Q^{R^*_{p-1}}.
\end{align*} 
Here $C_{(u),R} = \langle \tilde q_{(r)^*_{p^n-2p^s}}\otimes \gamma_{p^s}(\bar v_2) , St^{(u),R}U_2\rangle .$

If $2r > p^n - 2p^s - 1$, then $P^r\tilde  M_{n,s} =  0$ since d$\dim \tilde M_{n,s} = p^n - 2p^s$. Suppose $2r  \leqslant p^n  - 2p^s - 1$. Set $t_i = \alpha_i-\alpha_{i-1}, $ with $ 0 \leqslant i \ne  s,\,  n, \ t_s = \alpha_s + 1 -  \alpha_{s-1},\ t_n  = h  - \alpha_{n-1},\  \bar R_0 = (t_1,\ldots,t_n),\  \bar  R_u  =  (t_1,\ldots,t_u - 1,\ldots,
t_n),\ 1 \leqslant u \leqslant n.$ From Proposition \ref{md37} we have
$$ C_{(u),R} = \begin{cases} ct_u\ ,\ &R = \bar R_u,\\
0\ ,&\text{otherwise.}\end{cases}$$
It is easy to verify that
$$   r((u),\bar R_u) + r + s + 1 = nh\ (\text{mod}\ 2).$$
Theorem \ref{dl32} now is proved by combining the above equalities.
\end{proof}

Now we return to the proof of Theorem \ref{dl34}. It is proved by the same argument as given in the proof of Theorem \ref{dl32}. We only compute $St^{S,R}V_2$.

\begin{prop} For $R = (r_1,\ldots, r_n)$, $r_0  = p -  r_1 - \ldots - r_n $, we have
	$$St^RV_2 = \begin{cases} V_2^{p^s}, \hskip 2cm r_s = p,\ r_i = 0,\ i \ne \ s,\\
	\displaystyle{\sum_{s=0}^{n-1}
	\frac{(p-1)!(r_{s+1}+\ldots+r_n)}{r_0!\ldots r_n!}
	V_1^{\vert  R\vert + p-p^{s+1}}V_2^{p^s}},\\
	\hskip 2.8cm  0\leqslant  r_i  < p,\ 0\le
	i\leqslant n, \\
	0\ , \hskip 2.3cm\text{otherwise.}\end{cases}$$
\end{prop}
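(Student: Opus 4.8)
The plan is to compute $St^RV_2$ by the same route used for $St^{S,R}U_2$ in Proposition~\ref{md37}, namely by first evaluating $d_n^*P_nV_2$ explicitly in terms of Dickson–M\`ui invariants and then reading off the coefficients via Theorem~\ref{dl12}. First I would recall that $V_2 = L_2/L_1 = [0,1]/y_1$, so that $L_1^{p-1}V_2 = y_1^{p-1}[0,1] = Q_{1,0}[0,1]$; alternatively, since $\dim V_2 = 2p$ and $V_2 = L_2/L_1$, I will use the multiplicativity of $d_n^*P_n$ (Proposition~\ref{md11}(i)) together with the already-computed $d_n^*P_ny_1 = V_{n+1}$ and $d_n^*P_nL_2$. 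Concretely I would write $V_2$ in a form whose $d_n^*P_n$-image is accessible: multiply by $L_1^{p-1} = y_1^{p-1}$ to clear the denominator, compute $d_n^*P_n(y_1^{p-1}[0,1])$ using Proposition~\ref{md11}(iii) applied with $y = y_1$ for the $y_1^{p-1}$ factor (giving the binomial sum $\binom{p-1}{R}$ of Notation~\ref{kh35}, exactly as in the proof of Proposition~\ref{md37}) and Lemma~\ref{bd38}(i) with $u = 0$, $v = 1$ — that is, $[0,1] = \sum_{s=0}^{0}V_1^{p-p^{s+1}+p^0}V_2^{p^s}$, which here is simply $V_2$ with the appropriate power — to handle the $[0,1]$ factor after applying $d_n^*P_n$ via Proposition~\ref{md11}(i).

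Next I would combine these: the image $d_n^*P_n(y_1^{p-1}V_2)$ becomes, up to the constant $\mu(2p)^n$ (or the relevant sign-twisted version coming from Proposition~\ref{md11}(i)), a sum over $R = (r_1,\ldots,r_n)$ of terms $(-1)^{r(\emptyset,R)}\binom{p-1}{R}\tilde Q^{R^*}\cdot (\text{image of }[0,1])$. The key bookkeeping step is to express the $d_n^*P_n$-image of $[0,1]$ — equivalently of $L_2$ divided out appropriately — using Lemma~\ref{bd38}(i), which gives $[u,v] = \sum_{s=u}^{v-1}V_1^{p^v-p^{s+1}+p^u}V_2^{p^s}$, so that the combinatorial factor $r_{s+1} + \ldots + r_n = w_s$ will emerge exactly as it did in Proposition~\ref{md37} (where $St^{S,R}U_2$ picked up $h^{-1}w_s$); here there is no $h^{-1}$ because $V_2$ carries no $L_1^{h-1}$ twist, and instead one gets the factor $(r_{s+1}+\ldots+r_n)$ directly from differentiating/expanding $[0,1]^{\text{power}}$. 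Reindexing $R$ to include $r_0 = p - r_1 - \ldots - r_n$ turns $\binom{p-1}{R}$ (times the emerging linear factor) into $\frac{(p-1)!\,(r_{s+1}+\ldots+r_n)}{r_0!\ldots r_n!}$, and the power of $V_1$ works out to $|R| + p - p^{s+1}$ by degree count, since $\dim St^RV_2 = 2p + 2r = 2p + |R|$ forces $V_1^{a}V_2^{p^s}$ to have $2a + 2p^s = 2p + |R|$, i.e.\ $a = p + (|R| - 2p^s)/2 + \ldots$ — I would verify the exponent matches $|R| + p - p^{s+1}$ using $|R| = \sum(p^i-1)r_i$ and the constraint on $r_0$. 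Finally, Theorem~\ref{dl12} with $z = V_2 = d_1^*P_1y$ (so $q = 2$) extracts the coefficient of the $St^R$-component, yielding the stated formula; the separate case $St^RV_2 = V_2^{p^s}$ for $r_s = p$, $r_i = 0$ otherwise corresponds to the ``Frobenius'' term $P^{p^s}V_2 = V_2^p$ iterated, which appears when the reduced sum degenerates, and the ``otherwise'' case (some $r_i \ge p$, or more precisely $R$ not of either listed shape) gives $0$ since $\binom{p-1}{R} = 0$.

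The main obstacle I expect is the sign and constant bookkeeping: reconciling the $\mu(2p)^n = \mu(p^1 + [-2p^0])^n$-type normalization constants and the $(-1)^{r(\emptyset,R)}$, $(-1)^{n(h-1)}$ sign factors that accumulate from Proposition~\ref{md11}(i) (the $(-1)^{nhqr}$ twist with $q = \dim y_1^{p-1} = 2(p-1)$, $r = \dim[0,1]$) and from Proposition~\ref{md11}(iii), so that after dividing out by $d_n^*P_nL_1^{p-1}$ (which is a unit in the relevant localization, with the explicit binomial expansion as in the proof of Proposition~\ref{md37}) all signs cancel and one is left with the clean positive coefficient $\frac{(p-1)!\,(r_{s+1}+\ldots+r_n)}{r_0!\ldots r_n!}$. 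A secondary subtlety is justifying the division: $V_2 = L_2/L_1$ lives in the field of fractions, so I would either work throughout with $L_1^{p-1}V_2$ and cancel at the end (using that $d_n^*P_n$ is a ring monomorphism by Proposition~\ref{md11}(i), hence injective on the localization), or invoke the already-established pattern from Proposition~\ref{md37} where exactly this maneuver was carried out for $U_2 = M_{2,1}L_1^{h-1}$. Once these constants are pinned down, the extraction of $St^R$-coefficients via Theorem~\ref{dl12} is mechanical, and the three cases of the formula fall out from the three possibilities for $R$ (Frobenius power, generic admissible $R$ with all $r_i < p$, and everything else).
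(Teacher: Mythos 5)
Your overall strategy --- compute $d_n^*P_nV_2$ explicitly in the invariants and read off $St^RV_2$ as the coefficient of $\tilde Q^{R^*_{2p}}$ via Theorem \ref{dl12} --- is the same as the paper's. But the execution plan has one unnecessary detour and one genuine gap. The detour: there is no need to clear denominators, localize, or divide by $d_n^*P_nL_1^{p-1}$. Since $L_1=y_1$ and $L_2=y_1y_2^p-y_2y_1^p$, one has the polynomial identity $V_2=y_2^p-y_2y_1^{p-1}$ (note also that your claimed identity is off: $L_1^{p-1}V_2=y_1^{p-2}[0,1]$, not $y_1^{p-1}[0,1]$). The paper simply applies $d_n^*P_n$ termwise to $y_2^p-y_2y_1^{p-1}$, using Proposition \ref{md11}(iii) for $y_2$ and Lemma \ref{bd36} with $\varepsilon=0$, $b=p-1$, $y=y_1$ for the factor $y_1^{p-1}$.

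The gap is at the step you describe as "mechanical." After the expansion, the cross terms have the form $\binom{p-1}{R'}Q_{n,u}\tilde Q^{{R'}^*_{2(p-1)}}y_1^{|R'|+p-1}y_2^{p^u}$, and the monomials $y_1^ay_2^{p^u}$ are \emph{not} of the target form $V_1^aV_2^{p^s}$; converting them is where all the content lies, and your proposed mechanism ("the factor $(r_{s+1}+\cdots+r_n)$ emerges from differentiating/expanding $[0,1]^{\text{power}}$, as in Proposition \ref{md37}") does not correspond to any actual step. In Proposition \ref{md37} the factor $w_s=\sum_{v>s}r_v$ arises because a genuine coefficient $r_v$ is produced by the product $d^*P(M_{2,1})\cdot d^*P(L_1^{h-1})$; here nothing is differentiated. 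What is actually needed is: (i) reindex $R=R'+e_u$ so the cross term attached to $y_2^{p^u}$ carries $\binom{p-1}{R_u}$ with $R_u=(r_1,\ldots,r_u-1,\ldots,r_n)$; (ii) apply the identity $y_1^{|R|+p-p^u}y_2^{p^u}=-y_1^{|R|+p-p^u-p^v}[u,v]+y_1^{|R|+p-p^v}y_2^{p^v}$ (with $v$ the largest index with $r_v>0$) and Lemma \ref{bd38}(i) to turn $[u,v]$ into $\sum_{s=u}^{v-1}V_1^{\cdot}V_2^{p^s}$; and (iii) use $\sum_{u=0}^{n}\binom{p-1}{R_u}=0$ so the leftover $y_1^{\cdot}y_2^{p^v}$ pieces cancel against the Frobenius terms $Q_{n,s}^py_2^{p^{s+1}}$, leaving $\sum_s(-1)^sQ_{n,s}V_2^{p^s}$ plus terms whose $V_2^{p^s}$-coefficient is $\sum_{u=s+1}^{n}\binom{p-1}{R_u}=\frac{(p-1)!(r_{s+1}+\cdots+r_n)}{r_0!\cdots r_n!}$. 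Without (ii)--(iii) the computation does not close up into the stated basis. A small further slip: your degree check should read $2a+2p^{s+1}=2p+2|R|$ (since $\dim V_2=2p$ and $\deg St^{\emptyset,R}=2|R|$), which does give $a=|R|+p-p^{s+1}$.
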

\begin{proof}
Recall that $V_2 = y_2^p - y_2y_1^{p-1}$. Applying Proposition \ref{md11} and Lemma \ref{bd36} with $y = y_1$ or $y = y_2$ we get
 \begin{align*} d_n^*P_nV_2 &= \sum_{s=0}^n (-1)^{n + s}
Q_{n,s}^py_2^{p^{s+1}}\tag a\\
&\quad - (-1)^n\sum_{u=0}^n\sum_{R'}(-1)^{u+r(\emptyset ,R')}
\binom{p-1}{R'}Q_{n,u}  \tilde  Q^{{R'}^*_{2(p-1)}}y_1^{\vert  R'\vert  + p - 1}y_2^{p^u}\\ 
&= (-1)^n\sum_{s=0}^n (-1)^s Q_{n,s}^p\big(y_2^{p^{s+1}}
-  y_2^{p^s}y_1^{(p-1)p^s}\big)\\
&\quad - (-1)^n\sum_{u=0}^n\sum_R(-1)^{r(\emptyset ,R)}\binom{p-1}{R_u}
\tilde Q^{R^*_{2p}}y_1^{\vert R\vert + p - p^u}y_2^{p^u}\Big).
\end{align*} 
Here the last sum runs over all $ R =  (r_1,\ldots, r_n)$ with $ 0 \leqslant r_i < p,   \ 0 \leqslant i \leqslant n,  \ R_0 = R,   \ R_u = (r_1,
\ldots, r_u-1,\ldots,r_n),\ 1 \leqslant u \leqslant n$.

Let $v$ be the greatest index such that $r_v > 0$. A simple computation shows
\begin{align*}	y_1^{|R| +  p  -  p^u}y_2^{p^u}= - y_1^{|R| + p - p^u  -  p^v}[u  ,   v]    +  y_1^{|R| + p - p^v}y_2^{p^v}.\tag b
\end{align*}
Combining (a), (b), Lemma \ref{bd38} and the fact that $\sum_{u=0}^n\binom{p-1}{R_u}  =  0$ we obtain
\begin{multline*} d_n^*P_nV_2 =
\mu(2p)^n\Big(\sum_{s=0}^n(-1)^sQ_{n,s}V_2^{p^s}   \\   +
\sum_R(-1)^{r(\emptyset ,R)}\tilde Q^{R^*_{2p}}\sum_{s=0}^n\sum_{u=s+1}^n\binom{p-1}{R_u}
V_1^{\vert R\vert+p-p^{s+1}}V_2^{p^s}\Big).
\end{multline*}
The proposition now follows from this equality and Theorem \ref{dl12}.
\end{proof}

\bigskip
{}

\vskip.6cm
\hskip1cm Department of Mathematics

\hskip1cm Quinhon Pedagogic University

\hskip1cm  Dai hoc Su Pham Quinhon

\hskip1cm 170 Nguyen Hue Quinhon, Vietnam

\end{document}